\newcommand{\DNR}{\mathrm{DNR}}
\newcommand{\dom}{\mathrm{dom}}
\newcommand{\bstrings}{\omega^{<\omega}}
\newcommand{\dset}[2]{\{#1 : #2 \}}
\DeclareMathOperator{\converges}{\downarrow}
\DeclareMathOperator{\SNR}{SNR}
\DeclareMathOperator{\SNPR}{SNPR}
\DeclareMathOperator{\CI}{CI}
\newcommand{\la}{\langle}
\newcommand{\ra}{\rangle}
\begin{document}%\toggletrue{draftmode}
\title{From eventually different functions to pandemic numberings\thanks{This work was partially supported by
	a grant from the
	Simons Foundation (\#315188 to Bj\o rn Kjos-Hanssen).
	}
}

\author{
	Achilles A. Beros\inst{1} \and
	Mushfeq Khan\inst{1} \and
	Bj{\o}rn Kjos-Hanssen\inst{1}\orcidID{0000-0002-6199-1755} \and
	Andr\'e Nies \inst{2}
}
\institute{
	University of Hawai`i at M{\=a}noa, Honolulu HI 96822, USA
	\email{\{beros,mushfeq,bjoernkh\}@hawaii.edu} \and
	University of Auckland, New Zealand
	\email{nies@cs.auckland.ac.nz}
}

	\maketitle

	\begin{abstract}
		A function is strongly non-recursive (SNR) if it is eventually different from each recursive function.
		We obtain hierarchy results for the mass problems associated with computing such functions with varying growth bounds.
		In particular, there is no least and no greatest Muchnik degree among those of the form $\SNR_f$ consisting of SNR functions bounded by varying recursive bounds $f$.

		We show that the connection between SNR functions and canonically immune sets is, in a sense, as strong as that between DNR (diagonally non-recursive) functions and effectively immune sets. Finally, we introduce pandemic numberings, a set-theoretic dual to immunity.
	\end{abstract}

	\section{Introduction}
		It has been known for over a decade that bounding diagonally non-recursive functions by various computable functions leads to a hierarchy of computational strength \cite{DNRWWKL,MR3629748} and this hierarchy interacts with Martin-L\"of random reals and completions of Peano Arithmetic \cite{MillerGreenberg,MR3251268}. The strongly non-recursive functions form an arguably at least as natural class, and here we start developing analogous hierarchy results for it. 

		\begin{definition}
			A function $f: \omega \rightarrow \omega$ is \emph{strongly nonrecursive} (or \emph{$\SNR$}) if for every recursive function $g$, for all but finitely many $n \in \omega$, $f(n) \neq g(n)$. It is \emph{strongly non-partial-recursive} (or \emph{$\SNPR$}) if for every partial recursive function $g$, for all but finitely many $n$, if $g(n)$ is defined, $f(n) \neq g(n)$.
		\end{definition}

		Note that every $\SNPR$ function $f$ is $\SNR$, as well as \emph{almost $\DNR$}: for all but finitely many $n$, if $\varphi_n(n)$ is defined, then $f(n) \neq \varphi_n(n)$. Also, a function is strongly nonrecursive iff it is \emph{eventually different} from each recursive function. Thus it is eventually different in the sense of set theory with the recursive sets as ground model \cite{MR1350295}.

		\begin{definition}
			An \emph{order function} is a recursive, nondecreasing, and unbounded function $h: \omega \rightarrow \omega$ such that $h(0) \ge 2$. For a class $\mathrm{C}$ of functions from $\omega$ to $\omega$, let $\mathrm{C}_h$ denote the subclass consisting of those members of $\mathrm{C}$ that are bounded by $h$.
		\end{definition}

		\begin{theorem}\label{rec-dnr-to-rec-snpr}
			For each order function $h$, there exists an order function $g$ such that every $\DNR_g$ function computes an $\SNPR_h$ function.
		\end{theorem}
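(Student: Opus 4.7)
The plan is a block construction. Partition $\omega$ into consecutive finite blocks $B_0 < B_1 < \ldots$, with $s_m = |B_m|$ and $h_m = \min_{n \in B_m} h(n)$. Call $\sigma : B_m \to \{0, 1, \ldots, h_m - 1\}$ a \emph{good pattern} if, for every $e \le m$ and every $n \in B_m$ with $\varphi_e(n) \converges$, $\sigma(n) \ne \varphi_e(n)$. If we can compute from $F \in \DNR_g$ a function $f$ whose restriction to each $B_m$ is a good pattern, then $f \in \SNPR_h$: for each $e$, the set $\{n : \varphi_e(n) \converges = f(n)\}$ is contained in the finite union $B_0 \cup \cdots \cup B_{e-1}$.

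A counting argument bounds the number of bad patterns on $B_m$ by $(m+1) s_m h_m^{s_m - 1}$ out of $h_m^{s_m}$ total. I choose the blocks recursively so that $h_m > 2(m+1) s_m$; this is possible because $h$ is unbounded, by letting $s_m$ grow slowly relative to the rate at which $h$ grows. Then good patterns form a strict majority. Moreover, the set of bad patterns on $B_m$ is uniformly recursively enumerable: enumerate triples $(e, n, v)$ with $e \le m$, $n \in B_m$, and $\varphi_e(n) \converges = v$, marking each $\sigma$ with $\sigma(n) = v$ as bad.

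To select a good pattern using $F$, reserve for each $m$ a computable sequence of distinct indices $a_{m,0}, a_{m,1}, \ldots$, disjoint from all $B_{m'}$. By the Recursion Theorem (with $s$-$m$-$n$), arrange that $\varphi_{a_{m,i}}(a_{m,i})$ outputs the code in $\{0, \ldots, h_m^{s_m} - 1\}$ of the $i$-th enumerated bad pattern on $B_m$ (and diverges if there is no such). Set $g(a_{m,i}) = h_m^{s_m}$, and extend $g$ to an order function on the remaining inputs. The DNR property then forces $F(a_{m,i})$ to differ from the code of the $i$-th bad pattern, for each $i$.

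The main obstacle is to combine these constraints into a single good pattern on each $B_m$. A single DNR value only forbids one code, so a priori the values $F(a_{m,i})$ could all encode bad patterns, just not their respective forbidden targets. I expect to overcome this via a DNR-basis-type argument: with $g$ chosen to grow sufficiently quickly, $\DNR_g$ uniformly computes members of any non-empty co-recursively-enumerable subclass of $\{0, \ldots, h_m^{s_m} - 1\}$ whose complement has size at most the bound $(m+1) s_m h_m^{s_m - 1}$. The technical core is to establish this uniform DNR-basis lemma, presumably through an iterated Recursion-Theorem construction that ties the enumeration order of bad patterns to the $F$-values already inspected, forcing $F$'s avoidance power to sweep through every bad pattern before settling. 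Once a good pattern $\pi_m$ is extracted on every $B_m$, setting $f \upto B_m = \pi_m$ yields the required $\SNPR_h$ function computed from $F$.
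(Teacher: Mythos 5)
Your block construction is in the right spirit — both you and the paper reduce the problem to a ``multi-value avoidance'' statement about bounded $\DNR$ functions and set it up via the Recursion Theorem — but the step you explicitly defer is precisely the nontrivial content. The ``DNR-basis lemma'' you invoke (that $\DNR_g$ uniformly selects members of a co-r.e.\ subset of $\{0,\dots,N-1\}$ whose complement has bounded size) is essentially the Cenzer--Hinman result (Theorem~\ref{avoiding-multiple}), which the paper cites rather than reproves: from a $\DNR_a$ function one uniformly computes a function bounded by $ca$ that avoids $c$ prescribed partial-recursive values. Leaving this unestablished means the proof is incomplete at exactly the point you yourself identify as ``the technical core.'' You should either prove this lemma or cite it; the iterated-Recursion-Theorem sketch you gesture at cannot literally ``tie the enumeration order of bad patterns to the $F$-values already inspected,'' because the indices $a_{m,i}$ must be defined independently of the oracle $F$ (you cannot make $\varphi_{a_{m,i}}(a_{m,i})$ depend on $F(a_{m,j})$).

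There is also a quantitative problem with your parameters that goes in the wrong direction. Making $g$ ``grow sufficiently quickly'' weakens, not strengthens, $\DNR_g$; you want $g$ small at the relevant indices. Concretely, the Cenzer--Hinman reduction from $\DNR_a$ produces a function bounded by $ca$ avoiding $c$ values, so to pick a code in $\{0,\dots,N-1\}$ ($N=h_m^{s_m}$) avoiding $K = (m+1)s_m h_m^{s_m-1}$ bad codes you need $a \le N/K = h_m/((m+1)s_m)$, not $a = N$ as you set $g(a_{m,i})$. Your margin $h_m > 2(m+1)s_m$ keeps this ratio barely above $2$, which would force $g$ down to a constant and so it could not be an order function; you would need to choose the blocks more generously so that $h_m/((m+1)s_m)\to\infty$. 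The paper avoids the whole coding issue by working pointwise: it sets $x_n$ to be least with $h(x_n)\ge n^2$, uses the recursive $r$ with $\varphi_{r(x)}(e)=\varphi_e(x)$ so that a $\mathcal P^n_{n^2}$ function evaluated at $r(x)$ avoids $\varphi_e(x)$ for all $e<n$, and gets such a function from $\DNR_n$ by Theorem~\ref{avoiding-multiple}. Since the required $\DNR$ bound $n$ tends to infinity (and the uses $\gamma_n$ are computable), the resulting $g$ is genuinely an order function, with no need to encode a block of values into a single integer.
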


		In order to prove this theorem, we will need a result due to Cenzer and Hinman \cite{CenzerHinman}, in a form presented in Greenberg and Miller \cite{MillerGreenberg}. 

		\begin{definition}[Greenberg and Miller \cite{MillerGreenberg}]
			Let $a \ge 2$ and let $c > 0$. Let $\mathcal{P}^c_a$ denote the class of functions $f$ bounded by $a$ such that for all $e$ and for all $x < c$, if $\varphi_e(x)\converges$, then $f(e) \neq \varphi_e(x)$.
		\end{definition}
			
		\begin{theorem}[Cenzer and Hinman \cite{CenzerHinman}]\label{avoiding-multiple}
			Let $a \ge 2$ and $c > 0$. Then any $\DNR_a$ function computes a function in $\mathcal{P}^c_{ca}$. Moreover, the reduction is uniform in $a$ and $c$.
		\end{theorem}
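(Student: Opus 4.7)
The plan is to prove this by induction on $c$, with the base case handled by a single application of the s-m-n theorem together with the DNR property of $f$, and the inductive step reducing the problem to one additional DNR query.

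For the base case $c = 1$, given any $e$, use the s-m-n theorem to obtain a computable function $k$ such that $\varphi_{k(e)}(y) \simeq \varphi_e(0)$ for every $y$. Set $g(e) := f(k(e))$; since $f$ is $\DNR_a$, we have $g(e) < a$, and three cases arise. If $\varphi_e(0) \converges$ with value less than $a$, the DNR property yields $g(e) = f(k(e)) \neq \varphi_{k(e)}(k(e)) = \varphi_e(0)$. If $\varphi_e(0) \converges$ with value at least $a$, then $g(e) < a \le \varphi_e(0)$, so the inequality holds automatically. If $\varphi_e(0) \diverges$, no constraint applies. Hence $g \in \mathcal{P}^1_a$, uniformly in $f$ and $a$.

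For the inductive step from $c - 1$ to $c$, assume we can uniformly compute some $h \in \mathcal{P}^{c-1}_{(c-1)a}$ from $f$. To build $g \in \mathcal{P}^c_{ca}$, I would view $\{0, \ldots, ca - 1\}$ as the disjoint union of a lower block $[0, (c-1)a)$ and an upper block $[(c-1)a, ca)$. Via s-m-n, extract the first $c - 1$ threats from $\varphi_e$ and apply $h$ to obtain a lower-block candidate $y_0$ avoiding $\varphi_e(0), \ldots, \varphi_e(c-2)$; with a fresh s-m-n + DNR query I produce an upper-block fallback $y_1 := (c-1)a + f(k(e))$ where $\varphi_{k(e)}(k(e)) \simeq \varphi_e(c - 1) - (c - 1)a$ whenever this lies in $\{0, \ldots, a - 1\}$. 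Combining $y_0$ and $y_1$ into $g(e)$ by a further s-m-n-driven case analysis yields the desired function.

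The main obstacle is the combinatorial gluing in the inductive step. The candidate $y_0$ may collide with $\varphi_e(c-1)$, in which case one would like to switch to $y_1$; but $y_1$ itself may collide with one of $\varphi_e(0), \ldots, \varphi_e(c-2)$ when any of those threats happen to fall into $[(c-1)a, ca)$. Resolving this cleanly is likely best achieved by strengthening the induction hypothesis so that $h$ can be applied within any computably specified block of $(c-1)a$ targets, and then invoking it also on the upper block to handle threats landing there. Uniformity in both $a$ and $c$ is preserved throughout, since the s-m-n indices and all combinatorial selections used are primitive recursive in the parameters, so the composition of reductions at each inductive stage depends effectively on $a$ and $c$.
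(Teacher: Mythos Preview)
The paper does not prove this theorem at all: it is quoted as a result of Cenzer and Hinman (in the form given by Greenberg and Miller) and used as a black box in the proof of Theorem~\ref{rec-dnr-to-rec-snpr}. So there is no ``paper's own proof'' to compare against; I can only assess whether your argument stands on its own.

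Your base case $c=1$ is fine. The inductive step, however, is not a proof but a plan with a gap you yourself flag. You produce $y_0\in[0,(c-1)a)$ avoiding $\varphi_e(0),\dots,\varphi_e(c-2)$ and $y_1\in[(c-1)a,ca)$ avoiding $\varphi_e(c-1)$, and then need to \emph{choose} between them. The difficulty is not merely ``combinatorial gluing'': any rule for selecting $y_0$ versus $y_1$ that depends on where the threats land requires waiting for $\varphi_e(0),\dots,\varphi_e(c-1)$ to converge, which they need not do, so a naive case split is not computable. Your proposed remedy---strengthen the induction hypothesis so that $h$ can be applied ``within any computably specified block of $(c-1)a$ targets'' and also ``on the upper block''---does not type-check: the upper block has size $a$, not $(c-1)a$, and even with a relocatable $h$ you still face the same selection problem, since you cannot computably tell how many threats fall into each block.

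What is missing is a genuine self-referential step. The standard arguments (Cenzer--Hinman, or the presentation in Greenberg--Miller) use the recursion theorem so that the index at which $f$ is queried already ``knows'' what $g(e)$ will be, allowing the single forbidden value fed to $f$ to be exactly the one that would otherwise spoil the chosen output. Your s-m-n calls set up the forbidden values in advance, independently of the eventual choice, and that is why the gluing fails. If you want to rescue the induction, you should define the index $m$ for the upper-block query via the recursion theorem so that $\varphi_m(m)$ simulates the entire construction (including $y_0$), waits for the threats, and outputs the residue of whichever threat would kill the tentatively selected candidate; then $f(m)$ avoids precisely that value. Without such a fixed-point device, the inductive step as written does not go through.
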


		\begin{proof}[Proof of Theorem~\ref{rec-dnr-to-rec-snpr}] Let $r$ be the recursive function such that $\varphi_{r(x)}(e) = \varphi_e(x)$. For each $n \ge 2$, let $x_n \in \omega$ be the least such that $h(x_n) \ge n^2$.

		We construct $g$ to ensure that any $\DNR_g$ function computes a function $f$ that is bounded by $h$ and such that for all $x > x_n$ and for all $e < n$, if $\varphi_e(x)\converges$, then $f(x) \neq \varphi_e(x)$.

		In order to compute $f$ on the interval $[x_n, x_{n+1})$, a function in $\mathcal{P}^n_{n^2}$ suffices and such a function can be uniformly obtained from a $\DNR_n$ function, by Theorem~\ref{avoiding-multiple}. However, we only need a finite part of this function, and we can recursively determine how much. Note that the reductions in Theorem~\ref{avoiding-multiple} can be assumed to be total. For $n \ge 2$, let $\gamma_n$ denote the use of reduction that, given a $\DNR_n$ function, computes a $\mathcal{P}^n_{n^2}$ function.

		Then, letting \[m_n = \max(\dset{\gamma_n(r(x))}{x \in [x_n, x_{n+1})]}),\] it suffices for $g$ to be any recursive function such that for all $x \le m_n$, $g(x) \le n$. \end{proof}

		We also have the following counterpart to Theorem~\ref{rec-dnr-to-rec-snpr}:
		\begin{theorem}\label{rec-dnr-to-rec-snrp-2}
			For each order function $g$, there exists an order function $h$ such that every $\DNR_g$ function computes an $\SNPR_h$ function. 
		\end{theorem}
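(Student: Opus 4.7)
The plan is to mirror the proof of Theorem~\ref{rec-dnr-to-rec-snpr} with the roles of $g$ and $h$ reversed: now $g$ is fixed in advance, while $h$ and the reduction are to be constructed. As in that proof, let $r$ be the recursive function with $\varphi_{r(x)}(e) = \varphi_e(x)$, and let $\Phi_{c,a}$ denote the uniform Cenzer--Hinman reduction (Theorem~\ref{avoiding-multiple}) from a $\DNR_a$ function to a function in $\mathcal{P}^c_{ca}$, with recursive use $\gamma_{c,a}(z)$ on input $z$.

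I would build breakpoints $0 = x_2 < x_3 < \cdots$ and auxiliary parameters $a_n \ge n$ by induction on $n$. Given $x_n$, I pick $a_n$ large enough that $g(\gamma_{n,a_n}(r(x_n))) \le a_n$. Because $g$ is nondecreasing, this inequality forces $g(e) \le a_n$ for every index $e$ queried by $\Phi_{n,a_n}$ on input $r(x_n)$, and so any $\DNR_g$ function $d$ satisfies $d(e) \le g(e) \le a_n$ at those indices and thus serves as a legitimate $\DNR_{a_n}$ oracle. I let $x_{n+1}$ be the least $x > x_n$ with $g(\gamma_{n,a_n}(r(x))) > a_n$, which is finite since the use grows with its input while $g$ is unbounded. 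On $I_n := [x_n, x_{n+1})$ I set $h(x) := \max(h(x-1), n a_n)$; this makes $h$ recursive, nondecreasing, and (since $a_n \ge n$) unbounded.

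Given any $\DNR_g$ function $d$, define $f(x) := \Phi_{n,a_n}^d(r(x))$ for $x \in I_n$. By the construction, $f(x)$ lies in $\mathcal{P}^n_{n a_n}$ at position $r(x)$, whence $f(x) \ne \varphi_{r(x)}(e) = \varphi_e(x)$ for every $e < n$ with $\varphi_e(x)\converges$, and $f(x) \le n a_n \le h(x)$. Since for each fixed $e$ the intervals $I_n$ with $n > e$ cover a cofinite subset of $\omega$, it follows that $f$ is $\SNPR_h$.

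The main obstacle I anticipate is justifying the choice of $a_n$: this is a fixed-point question, requiring some $a$ with $g(\gamma_{n,a}(r(x_n))) \le a$. In the standard presentation of the Cenzer--Hinman reduction the use $\gamma_{n,a}(z)$ is obtained from an S-m-n index whose dependence on $a$ is mild, and in particular there is a computable upper bound on $\gamma_{n,a}(z)$ independent of $a$ for each fixed $n,z$; taking $a$ past $g$ of this bound then suffices. If the use genuinely depended on $a$, one would instead search simultaneously for suitable $(c,a)$ with $c \ge n$, or iterate Cenzer--Hinman. The remaining verifications --- that $h$ is an order function and that $f$ is $\SNPR_h$ --- are routine.
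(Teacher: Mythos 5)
Your proposal takes a genuinely different route from the paper. The paper does not use Cenzer--Hinman for this direction at all; it gives a direct coding argument. Fix uniformly recursive bijections $\tau_n : \omega \to \omega^n$ with projections $\pi^n_i$, and take $r$ so that $\varphi_{r(e,n)}$ is the constant function with value $\pi^n_e(\tau_n(\varphi_e(n)))$ whenever $\varphi_e(n)$ converges. Given a $\DNR_g$ function $f$, set
$j(n) = \tau_n^{-1}\big(\langle f(r(0,n)), \dots, f(r(n-1,n))\rangle\big)$.
If $j(n) = \varphi_e(n)$ for some $e < n$, then projecting onto the $e$-th coordinate gives $f(r(e,n)) = \varphi_{r(e,n)}(r(e,n))$, contradicting $\DNR$. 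The bound $h$ is read off directly as the max of $\tau_n^{-1}$ over tuples with $i_k < g(r(k,n))$. This is short, fully uniform, and requires no internal knowledge of Cenzer--Hinman.

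Your interval-by-interval scheme is plausible, but it rests on a property that Theorem~\ref{avoiding-multiple} as stated does not give you: that the use $\gamma_{c,a}(z)$ of the Cenzer--Hinman reduction can be bounded independently of $a$. You correctly identify this as the crux (the choice of $a_n$ is a fixed-point condition $g(\gamma_{n,a_n}(r(x_n))) \le a_n$), but neither of your fallback suggestions is clearly sufficient from the black-box statement alone: searching over $(c,a)$ still needs some a priori reason such a pair exists, and ``iterating Cenzer--Hinman'' is not spelled out. The claim is in fact true for the standard S-m-n construction of the reduction, so your argument can be completed, but only by opening up the box. Since the whole point of this theorem (vis-\`a-vis the previous one, where $g$ is chosen last and can simply be made small enough) is that $g$ is now fixed in advance, avoiding this dependence on implementation details is exactly what the paper's direct coding buys. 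A secondary quibble: you also implicitly need ``local correctness'' of $\Phi_{n,a_n}$ (the oracle need only look $\DNR_{a_n}$ on the queried positions), again an internal feature rather than part of the stated theorem, though the paper's own proof of Theorem~\ref{rec-dnr-to-rec-snpr} makes the same tacit assumption.
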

		\begin{proof}
			For $n \ge 1$, let $\tau_n : \omega \rightarrow \omega^n$ be a uniformly recursive sequence of bijections, and for $i < n$,
			$\pi^n_i : \omega^n \rightarrow \omega$ denote the projection function onto the $i$-th coordinate.

			Let $r$ be the recursive function such that for $n \ge 1$ and $e < n$, if $\varphi_e(n)$ converges, then $\varphi_{r(e, n)}$, on any input, outputs $\pi^n_e(\tau_n(\varphi_e(n))$. 

			Now, given a $\DNR_g$ function $f$, let $j(0) = 0$ and for $n \ge 1$, let \[j(n) = \tau_n^{-1}(\langle f(r(0, n)), ..., f(r(n-1, n))\rangle).\]

	 		Then $j(n) \neq \varphi_e(n)$ for any $e < n$: If it were, then we would have
			\[
			f(r(e,n)) = \pi^n_e(\tau_n(j(n))) = \pi^n_e (\tau_n(\varphi_e(n))) = \varphi_{r(e,n)}(r(e,n)),
			\]
			which contradicts the fact that $f$ is $\DNR$.
			Thus, $j$ is $\SNPR_h$ where
			for $n \ge 1$,
			\[
				h(n)=\max(
					\{
						\tau_n^{-1}(\langle i_1, ..., i_{n-1}\rangle) : i_k < g(r(k, n)) \text{ for all $k < n$}
					\}
				).
			\]
		\end{proof}

		\begin{theorem}[Kjos-Hansen, Merkle, and Stephan \cite{MR2813422}]		
			Every non-high $\SNR$ is $\SNPR$.
		\end{theorem}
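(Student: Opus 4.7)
The plan is to prove the contrapositive: if $f$ is SNR but fails to be SNPR, then $f$ is high. By Martin's domination characterization of highness, it suffices to produce a function $F \le_T f$ that dominates every total recursive function. Fix a partial recursive $\varphi_e$ witnessing the failure of SNPR, so that $B = \dset{n}{\varphi_e(n)\converges \text{ and } \varphi_e(n) = f(n)}$ is infinite. Using $f$, the set $B$ is c.e., so I can $f$-computably search for members of $B$ above any threshold; define $F(n)$ to be the least stage $t$ at which some $m \in [n,t]$ satisfies $\varphi_{e,t}(m)\converges$ and $\varphi_{e,t}(m) = f(m)$. Infinitude of $B$ makes $F$ total, and $F \le_T f$ by construction.

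To show that $F$ dominates an arbitrary total recursive $h$, which I may assume nondecreasing, I would use the SNR hypothesis against the auxiliary total recursive function $g_h$ defined by $g_h(n) = \varphi_{e,h(n)}(n)$ when this converges and $g_h(n) = 0$ otherwise; totality of $g_h$ rests on the observation that $\dset{n}{\varphi_{e,h(n)}(n)\converges}$ is a recursive set whenever $h$ is. Since $f$ is SNR, $f(n) \ne g_h(n)$ for almost all $n$, which forces that the true convergence stage $s(n)$ of $\varphi_e(n)$ exceeds $h(n)$ for almost every $n \in B$. For sufficiently large $n$, the witness $m \ge n$ in $B$ found while computing $F(n)$ lies above this threshold, so $F(n) \ge s(m) > h(m) \ge h(n)$, as required.

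The main source of leverage is the interaction between the two halves of the SNR assumption: $f$ eventually differs from every total recursive function, while $\varphi_e$ agrees with $f$ infinitely often, which together force $\varphi_e$ to converge arbitrarily slowly along $B$. The infinitude of $B$ then lets $f$ cash in those slow convergences as dominating values. I do not anticipate a serious obstacle beyond verifying the recursiveness of $g_h$ and the straightforward bookkeeping described above.
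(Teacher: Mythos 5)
Your proposal is correct, and it takes a genuinely different route from the paper's. The theorem has the form (non-high $\wedge$ $\SNR$) $\Rightarrow$ $\SNPR$, and there are two natural contrapositives: the paper proves (non-high $\wedge$ $\lnot\SNPR$) $\Rightarrow$ $\lnot\SNR$, whereas you prove ($\SNR$ $\wedge$ $\lnot\SNPR$) $\Rightarrow$ high. The paper fixes a partial recursive $\psi$ infinitely often equal to $f$, defines $g \le_T f$ measuring the time for $n+1$ agreements to accumulate, and since $f$ is not high takes a recursive $h$ that escapes $g$ and uses it to assemble an explicit recursive function $j$ that agrees with $f$ infinitely often, refuting SNR. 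You instead build an $f$-recursive ``first-agreement-past-$n$'' stage function $F$, then use the SNR hypothesis against the auxiliary total recursive function $g_h(n)=\varphi_{e,h(n)}(n)$ to force the convergence stages along $B$ to outrun every recursive $h$, so $F$ dominates and, by Martin's characterization, $f$ is high. Both arguments turn on Martin's high $\Leftrightarrow$ dominating theorem, but from opposite directions; your version avoids the bookkeeping of constructing the recursive witness $j$, at the cost of being less explicit about where the recursive infinitely-often-equal function would come from in the non-high case. The verification that $g_h$ is total recursive is as routine as you expect, and the chain $F(n)\ge s(m)>h(m)\ge h(n)$ for the witness $m\ge n$ (with $h$ nondecreasing) goes through cleanly.
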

		\begin{proof}
			Supose that $f:\omega \rightarrow \omega$ is not high, and that $\psi$ is a partial recursive function that is infinitely often equal to it. For each $n \in \omega$, let $g(n)$ be the least stage such that $|\dset{x \in \omega}{\psi(x)[g(n)]\converges = f(x)}| \ge n + 1$. Then $g$ is recursive in $f$.

			Since $f$ is not high, there is a recursive function $h$ that escapes $g$ infinitely often. We define a recursive function $j$ that is infinitely often equal to $f$. Let $j_0 = \emptyset$. Given $j_n$, let \[A = \dset{\langle x, \psi(x)\rangle}{x \notin \dom(j_n), \psi(x)[h(n)] \converges}.\]

			Let $y$ be the least such that it is not in the domain of $j_n \cup A$. Finally, let $j_{n+1} = j_n \cup A \cup \langle y, 0 \rangle$. 

			It is easily seen that $j = \bigcup_n j_n$ is recursive and infinitely often equal to $f$. 
		\end{proof}

		\begin{corollary}\label{rec-snr-to-rec-dnr}
			Given any order function $h$, every non-high $\SNR_h$ function computes a $\DNR_h$ function.
		\end{corollary}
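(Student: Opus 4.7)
The plan is to combine the Kjos-Hanssen--Merkle--Stephan theorem quoted just above with an elementary finite-modification step. Since $f$ is a non-high $\SNR_h$ function, that theorem gives immediately that $f$ is $\SNPR$; because $f$ is already bounded by $h$, it is in fact $\SNPR_h$.

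Next, I would specialise the $\SNPR$ property of $f$ to the partial recursive function $n \mapsto \varphi_n(n)$. This produces a threshold $N \in \omega$ such that for every $n \ge N$, whenever $\varphi_n(n) \converges$, we have $f(n) \neq \varphi_n(n)$. In other words, $f$ is already almost $\DNR$, and bounded by $h$ to boot.

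To promote this to a genuine $\DNR_h$ function $g$ computable from $f$, I would simply patch $f$ on the finite initial segment below $N$. Set $g(n) = f(n)$ for $n \ge N$; for $n < N$, hard-code a value in $\{0,1\} \setminus \{\varphi_n(n)\}$, defaulting to $0$ if $\varphi_n(n) \diverges$. Since $h(0) \ge 2$ and $h$ is nondecreasing, $h(n) \ge 2$ for every $n$, so these hard-coded values are bounded by $h$; the $\DNR$ condition then holds by construction on $[0,N)$ and by the almost-$\DNR$ property on $[N,\infty)$.

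The only mildly subtle point is that $N$ is not uniformly computable from $f$. But the statement to be proved is purely existential: it suffices that some such $N$ exists, and for that $N$ the finite table $g \upto N$ may simply be wired into the $f$-oracle algorithm computing $g$. I do not anticipate a significant obstacle beyond this observation.
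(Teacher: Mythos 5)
Your argument is exactly the one the paper leaves implicit: apply the quoted Kjos-Hanssen--Merkle--Stephan theorem to get $\SNPR_h$, use the paper's earlier remark that $\SNPR$ implies almost-$\DNR$, and patch finitely many values (legitimate since $h \ge 2$ everywhere and the finite table can be hard-coded non-uniformly). Correct, and same approach.
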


	\section{The SNR hierarchy}
		\subsection{Definitions and combinatorial lemmas}\label{sec:defs-and-combinatorics}

			The following definitions can also be found in \cite{MillerGreenberg} and \cite{KhanMiller}.

			\begin{definition}
				Given $\sigma \in \bstrings$, we say that a tree $T \subseteq \bstrings$ is \emph{$n$-bushy above $\sigma$} if
				every element of $T$ is comparable with $\sigma$, and
				for every $\tau \in T$ that extends $\sigma$ and is not a leaf of $T$, $\tau$ has at least $n$ immediate extensions in $T$.
				We refer to $\sigma$ as the \emph{stem} of $T$.
			\end{definition}

			\begin{definition}
				Given $\sigma \in \bstrings$, we say that a set $B \subseteq \bstrings$ is \emph{$n$-big above $\sigma$} if
				there is a finite $n$-bushy tree $T$ above $\sigma$ such that all its leaves are in $B$.
				If $B$ is not $n$-big above $\sigma$ then we say that $B$ is \emph{$n$-small} above $\sigma$.
			\end{definition}

			Proofs of the following lemmas can be found in \cite{MillerGreenberg} and \cite{KhanMiller}.

			\begin{lemma}[Smallness preservation property]\label{lem:big-subset}
				Suppose that $B$ and $C$ are subsets of $\bstrings$ and that $\sigma \in \bstrings$.
				If $B$ and $C$ are respectively $n$- and $m$-small above $\sigma$, then $B \cup C$ is $(n+m-1)$-small above $\sigma$.
			\end{lemma}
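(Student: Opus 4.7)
My plan is to prove the contrapositive: if $B \cup C$ is $(n+m-1)$-big above $\sigma$, then $B$ is $n$-big above $\sigma$ or $C$ is $m$-big above $\sigma$. I will proceed by induction on the size of a finite witnessing tree $T$ that is $(n+m-1)$-bushy above $\sigma$ with all leaves in $B \cup C$.

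For the base case, $T = \{\sigma\}$, so $\sigma$ is itself a leaf and lies in $B \cup C$. Then $\{\sigma\}$ is vacuously $k$-bushy above $\sigma$ for every $k$, and it witnesses that $B$ or $C$ is $k$-big above $\sigma$, depending on which set contains $\sigma$.

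For the inductive step, $\sigma$ is not a leaf of $T$, so it has immediate extensions $\sigma * a_1, \dots, \sigma * a_k$ in $T$ with $k \ge n+m-1$. For each $j$, the subtree $T_j$ of $T$ lying above $\sigma * a_j$ is $(n+m-1)$-bushy above $\sigma * a_j$ with leaves in $B \cup C$, and has strictly smaller size than $T$. By induction, for every $j$, either $B$ is $n$-big above $\sigma * a_j$ or $C$ is $m$-big above $\sigma * a_j$. The key combinatorial step is a pigeonhole argument: if fewer than $n$ of the $a_j$ fall in the first category and fewer than $m$ fall in the second, we would have $k \le (n-1) + (m-1) = n+m-2$, contradicting $k \ge n+m-1$. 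Hence at least $n$ of the children witness $B$-bigness, or at least $m$ of them witness $C$-bigness.

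In the first case, pick $n$ such children $\sigma * a_{j_1}, \dots, \sigma * a_{j_n}$ together with $n$-bushy trees $S_1, \dots, S_n$ above them whose leaves lie in $B$, and form $\{\sigma\} \cup S_1 \cup \cdots \cup S_n$; this is an $n$-bushy tree above $\sigma$ with leaves in $B$. The second case is symmetric. The only nontrivial ingredient is the pigeonhole step counting children, which is really just the arithmetic identity $(n-1)+(m-1)+1 = n+m-1$ packaged as an induction; the rest is bookkeeping about gluing subtrees.
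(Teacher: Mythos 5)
The paper does not itself prove this lemma---it cites Greenberg--Miller and Khan--Miller---and your argument (contrapositive, induction on the witnessing bushy tree, pigeonhole on the stem's children, then gluing the $n$ sub-witnesses) is essentially the standard proof from those sources; it is correct. One technicality worth flagging: inducting on $|T|$ has an edge case, since if you retain $\sigma$ and its predecessors in each subtree $T_j$ (so that $T_j$ is genuinely a tree with stem $\sigma * a_j$), then when $\sigma$ has a \emph{unique} immediate extension in $T$ you get $T_1 = T$, and the size does not drop. This can happen only when $n+m-1 = 1$, i.e.\ $n = m = 1$, where the lemma is immediate anyway (a set is $1$-small above $\sigma$ iff no extension of $\sigma$ lies in it, and that property is clearly preserved by unions). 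To avoid the edge case entirely, induct instead on the number of nodes of $T$ that properly extend $\sigma$, or on the height of $T$ above $\sigma$; either quantity strictly decreases in passing from $(T,\sigma)$ to $(T_j,\sigma * a_j)$. The remaining steps---the count $k \le (n-1)+(m-1)$ and the assembly of the $n$-bushy tree from $\{\sigma\}$ together with the chosen $S_i$---are fine.
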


			\begin{lemma}[Small set closure property]\label{lem:small-set-closure}
				Suppose that $B \subset \bstrings$ is $n$-small above $\sigma$. Let $C = \dset{\tau \in \bstrings}{\text{$B$ is $n$-big above $\tau$}}$.
				Then $C$ is $n$-small above $\sigma$.
				Moreover $C$ is \emph{$n$-closed}, meaning that if $C$ is $n$-big above a string $\rho$, then $\rho \in C$.
			\end{lemma}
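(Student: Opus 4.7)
The plan is to prove the $n$-closedness clause first by a direct tree-gluing argument, and then deduce the $n$-smallness of $C$ above $\sigma$ as an immediate corollary.

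For $n$-closedness, suppose $C$ is $n$-big above some $\rho \in \bstrings$. Unwinding the definition, there is a finite $n$-bushy tree $T$ above $\rho$ whose leaves all lie in $C$. For each leaf $\tau$ of $T$, the fact that $\tau \in C$ means precisely that $B$ is $n$-big above $\tau$, so we may pick a finite $n$-bushy tree $T_\tau$ above $\tau$ whose leaves are all in $B$. I would then form the union $T^* = T \cup \bigcup_{\tau} T_\tau$, where the union runs over leaves of $T$. The key verification is that $T^*$ is $n$-bushy above $\rho$: every node of $T^*$ is comparable with $\rho$ (since every node of $T$ is comparable with $\rho$, and every node of each $T_\tau$ is comparable with $\tau \sqsupseteq \rho$); and every non-leaf of $T^*$ has at least $n$ immediate extensions, either because it is an internal node of $T$ (so witnessed inside $T$), or because it is a leaf $\tau$ of $T$ that is internal in $T^*$ (witnessed by $T_\tau$), or because it is an internal node of some $T_\tau$ (witnessed inside $T_\tau$). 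The leaves of $T^*$ are exactly the leaves of the various $T_\tau$, all of which lie in $B$. Hence $B$ is $n$-big above $\rho$, i.e., $\rho \in C$.

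For the smallness clause, suppose toward a contradiction that $C$ were $n$-big above $\sigma$. By the $n$-closedness just established (applied with $\rho = \sigma$), we would get $\sigma \in C$, meaning that $B$ is $n$-big above $\sigma$, directly contradicting the hypothesis that $B$ is $n$-small above $\sigma$. Thus $C$ is $n$-small above $\sigma$.

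The only real obstacle is bookkeeping in the gluing step: one must check that when leaves of $T$ are promoted to internal nodes in $T^*$, their branching in $T^*$ still has size at least $n$, and that no accidental conflicts occur (for example, if some $T_\tau$ is the trivial one-node tree $\{\tau\}$ because $\tau$ was already in $B$, in which case $\tau$ remains a leaf in $T^*$ and the bushiness condition is vacuous at $\tau$). Both checks are routine once one is careful that the branching witnesses for $T$ and for $T_\tau$ live in disjoint parts of the extended tree.
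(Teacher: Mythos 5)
Your proof is correct and it is precisely the standard tree-gluing argument used in the cited sources (Greenberg--Miller, Khan--Miller), which the paper defers to rather than reproducing. The only cosmetic wrinkle: you write that \emph{every} non-leaf of $T^*$ has at least $n$ immediate extensions, whereas the definition of $n$-bushy only demands (and only delivers) this for non-leaves that extend the stem $\rho$; restricting the case analysis to such nodes, your three cases cover exactly what is required, and the rest of the argument, including the reduction of smallness to closedness, is sound.
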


			\begin{definition}
				Given an order function $h$, let $h^{<\omega}$ denote the set of finite strings in $\omega^{<\omega}$ whose entries are bounded by $h$, and let $h^n$ denote the set of such strings of length $n$.
			\end{definition}

			\begin{theorem}\label{low-DNR-hierarchy} Let $h$ be any order function. Then, uniformly in $h$, we can find a recursive function $\pi$ such that if $g$ is any order function such that $h(n)/g(\pi(n))$ is unbounded, then there is a low $f \in \DNR_h$ that computes no $\DNR_g$ function.				
			\end{theorem}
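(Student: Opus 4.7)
The plan is a $\zerojump$-effective bushy-tree forcing construction, in the style of Kumabe--Lewis and Khan--Miller. Conditions are pairs $(\sigma, B)$ with $\sigma \in h^{<\omega}$ and $B$ a c.e.\ subset of $h^{<\omega}$ that is $k_\sigma$-small above $\sigma$, where $k_\sigma$ is a smallness budget tracked by the construction and increased slowly across stages. A real $f \in h^{\omega}$ meets the condition iff $\sigma \prec f$ and no initial segment of $f$ lies in $B$. The function $\pi$ is defined recursively in $h$ so that, for any order function $g$ satisfying the hypothesis, arbitrarily large $n$ exist at which $h(n)$ dominates any fixed multiple of $g(\pi(n))+1$; this is the only place the growth hypothesis is used, and it is enough because $\pi$ may be chosen without reference to $g$.

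Three families of requirements drive the construction: $P_n$: $f(n) \neq \varphi_n(n)$ if defined, placing $f$ in $\DNR_h$; $L_e$: decide whether $\varphi_e^f(e)\converges$, for lowness; $N_e$: $\varphi_e^f \notin \DNR_g$. Each $P_n$ contributes a $1$-small forbidden set and is routine. The $L_e$ are handled in the standard $\zerojump$-forcing way, asking whether some valid extension forces convergence and either extending accordingly or enlarging $B$ by the (then small) set of convergent extensions.

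The heart of the construction is the extension lemma for $N_e$. Given $(\sigma, B)$, first use $\zerojump$ to test whether there exist $\tau \succeq \sigma$ avoiding $B$ and $n$ with $\varphi_e^\tau(n) \converges > g(n)$; if so, extend to such $\tau$, making $\varphi_e^f$ fail to be $g$-bounded. Otherwise, for each $n$ define the c.e.\ set $S_n = \{v \leq g(n) : \{\tau \succeq \sigma,\ \tau \notin B,\ \varphi_e^\tau(n) = v\} \text{ is } K_e\text{-big above } \sigma\}$, with $K_e$ the current bushiness target. Note $|S_n| \leq g(n)+1$ and that $S_n$ is uniformly c.e. Applying the recursion theorem, find an index $n^*$, lying in a range where the hypothesis on $h(n)/g(\pi(n))$ supplies enough bushiness, such that $\varphi_{n^*}(n^*)$ equals the first element enumerated into $S_{n^*}$. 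If $S_{n^*} \neq \emptyset$, then $\varphi_{n^*}(n^*) \in S_{n^*}$, so the set of $\tau$ with $\varphi_e^\tau(n^*) = \varphi_{n^*}(n^*)$ is $K_e$-big above $\sigma$; extending $\sigma$ into it forces $\varphi_e^f(n^*) = \varphi_{n^*}(n^*)$, violating the DNR property. If $S_{n^*} = \emptyset$, then every value-class at $n^*$ is $K_e$-small, so by Lemma~\ref{lem:big-subset} applied $g(n^*)+1$ times, the set $T_{n^*}$ of $\tau$ with $\varphi_e^\tau(n^*)\converges$ is $((g(n^*)+1)(K_e-1)+1)$-small above $\sigma$; enlarging $B$ by $T_{n^*}$ fits the smallness budget precisely because $h$ dominates $g\circ\pi$, and forces $\varphi_e^f(n^*)$ to diverge.

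The chief obstacle is the combinatorial bookkeeping of the budgets $k_\sigma$ across all three requirement families, so that the cumulative enlargements of $B$ remain within the bushiness available at each depth, and so that the recursion-theoretic definition of $n^*$ is uniform in the forcing data. Once this accounting is set up, Lemma~\ref{lem:big-subset} and Lemma~\ref{lem:small-set-closure} deliver the dense extensions routinely, and a $\zerojump$-computable sequence of conditions yields the desired low $f \in \DNR_h$ that computes no $\DNR_g$ function.
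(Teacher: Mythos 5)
Your plan matches the paper's proof in its essentials: a $\zerojump$-effective bushy-tree forcing with c.e.\ small bad sets, the three requirement families, and a self-referential index trick. In your formulation, $\varphi_{n^*}(n^*)$ reads off the first value $v$ for which $\{\tau : \varphi_e^\tau(n^*)=v\}$ becomes $K_e$-big; the paper packages the same idea as an index $q(\sigma,e,k)$ for a program that searches for a $k$-big class of strings agreeing on its own index and outputs the common value, and defines $\pi(n)=\max\{q(\sigma,e,k):\sigma\in h^n,\ e,k\le n\}$, which is precisely the explicit definition of $\pi$ that your plan defers to ``the bookkeeping.''

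One genuine flaw: the preliminary $\zerojump$ test for a $\tau\succeq\sigma$ with $\tau\notin B$ and some $n$ with $\varphi_e^\tau(n)\converges\;>g(n)$ is a $\Sigma_2$ question (membership in the c.e.\ set $B$ is $\Sigma_1$, so $\tau\notin B$ is $\Pi_1$, and the outer existentials then give $\Sigma_2$), so $\zerojump$ cannot decide it. It is also unnecessary: in the ``all value-classes small'' case, adding $C=\bigcup_{v<g(x)}\{\tau : \varphi_e^\tau(x)=v\}$ to $B$ already forces $\Phi_e^f(x)$ to either diverge or converge to a value $\ge g(x)$, and either outcome defeats $\DNR_g$ with no separate case split. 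That is exactly the paper's move, with the smallness arithmetic $h(n)\ge k(g(\pi(n))+1)\ge k(g(x)+1)$, where $k=h(|f_s|)$ and $x=q(\sigma,e,k)\le\pi(n)$, supplying the room for $B_s\cup C$ to remain $h(n)$-small. Drop the preliminary test and supply the $\pi$-definition above, and your plan becomes the paper's proof.
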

			\begin{proof}
				Given $\sigma \in h^{<\omega}$, let $q(\sigma, e, k)$ be an index for the partial recursive function that searches for a $k$-big set $A \subset h^{<\omega}$ above $\sigma$ such that $\Phi^\tau_e(q(\sigma, e, k))$ converges and is constant as $\tau$ ranges over $A$, and which then outputs this constant value. Let \[\pi(n) = \max\dset{q(\sigma, e, k)}{\sigma \in h^n, e, k \le n}.\]

				Next, we describe a $0'$-recursive construction of $f$. We define a sequence \[f_0 \preceq f_1 \preceq f_2 \preceq ...\] of finite strings in $h^{<\omega}$, and \[B_0 \subseteq B_1 \subseteq B_2 \subseteq ... \] of r.e.\ subsets of $h^{<\omega}$ such that for each $s \in \omega$, $B_s$ is $h(|f_s|)$-small and $h(|f_s|)$-closed above $f_s$.

				Let $f_0 = \langle \rangle$, and let $B_0$ be the set of non-$\DNR$ strings. Next, we describe how to construct $f_{s+1}$ and $B_{s+1}$ given $f_s$ and $B_s$.

				\noindent \underline{If $s = 2e$ is even}: We ensure that $\Phi_e^f$ is not $\DNR_g$. Let $k = h(|f_s|)$ and let $n \ge k, e$ be the least such that $h(n) \ge k(g(\pi(n)) + 1)$. We begin by extending $f_s$ to a string $\sigma \notin B_s$ of length $n$. Note that $B_s$ is $k$-small and $k$-closed above $\sigma$. Let $x = q(\sigma, e, k)$, and note that $x \le \pi(n)$.

				Now, if $\varphi_{x}(x) \converges$ to some value $i$ less than $g(x)$, then the set 
				\[A_i = \dset{\tau \succeq \sigma}{\Phi^\tau_e(x)\converges = i}\] is $k$-big above $\sigma$, so there is an extension $\tau$ of $\sigma$ such that $\tau \in A_i \setminus B_s$. Let $f_{s+1} = \tau$ and $B_{s+1} = B_s$. This forces $\Phi^f_e$ to not be $\DNR$.

				Otherwise, for each $i < g(x)$, $A_i$ is $k$-small above $\sigma$, and so \[C = \bigcup_{i < g(x)} A_i\] is $ k g(x)$-small above $\sigma$, and $C \cup B_s$ is $k (g(x) + 1)$-small above $\sigma$. Since $h(n) \ge k (g(\pi(n)) + 1) \ge k (g(x) + 1)$, we can let $B_{s + 1} = B_s \cup C$ and $f_{s+1} = \sigma$. This forces $\Phi^f_e(x)$ to either diverge, or to converge to value greater than or equal to $g(x)$.

				\noindent \underline{If $s = 2e + 1$ is odd}: We ensure that $f$ is low. We begin by extending $f_s$ to a string $\sigma \notin B_s$ such that $h(|\sigma|) \ge 2 h(|f_s|)$. If the set \[F_e = \dset{\tau \succeq \sigma}{\varphi^\tau_e(e) \converges}\] is $h(|f_s|)$-big above $\sigma$, then there is a $\tau \in F_e \setminus B_s$. Let $f_{s+1} = \tau$ and $B_{s+1} = B_s$. This forces $e$ into the jump of $f$. Otherwise, let $f_{s+1} = \sigma$ and let $B_{s + 1} = B_s \cup F_e$, which is $h(|\sigma|)$-small above $\sigma$. This forces $e$ out of the jump of $f$. \end{proof}

			By making $g$ grow slowly enough, we get:
			\begin{corollary}\label{dnr-low-below}
				For every order function $h$ there is an order function $g$ such that there is a low $\DNR_h$ that computes no $\DNR_g$.
			\end{corollary}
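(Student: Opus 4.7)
The plan is to apply Theorem~\ref{low-DNR-hierarchy} directly, choosing $g$ slow-growing enough that $h(n)/g(\pi(n))$ is unbounded. First I would extract the recursive function $\pi$ supplied by Theorem~\ref{low-DNR-hierarchy} from the input $h$. Since $h$ is recursive and unbounded, I can recursively produce a strictly increasing sequence $n_0 < n_1 < \cdots$ of witnesses with $h(n_k) \ge 2^{2k}$ for every $k$; these are the inputs on which the ratio $h/(g \circ \pi)$ will be driven to infinity.

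Next I would build $g$ as a step function whose jumps occur only after the relevant $\pi$-values. Define a strictly increasing recursive threshold sequence by $m_{-1} = 0$ and $m_k = \max(m_{k-1} + 1,\, \pi(n_k) + 1)$, and set $g(j) = k + 2$ for all $j \in [m_{k-1}, m_k)$. Then $g$ is recursive, nondecreasing, unbounded (as the $m_k$ strictly increase), and satisfies $g(0) = 2$, so it is an order function. By construction $\pi(n_k) < m_k$, whence $g(\pi(n_k)) \le k + 2$, and therefore
\[
\frac{h(n_k)}{g(\pi(n_k))} \;\ge\; \frac{2^{2k}}{k+2},
\]
which tends to infinity. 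Hence $h(n)/g(\pi(n))$ is unbounded in $n$.

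Having verified the hypothesis of Theorem~\ref{low-DNR-hierarchy} for this $g$, the theorem directly supplies a low $\DNR_h$ function that computes no $\DNR_g$ function, which is the statement of the corollary. No real obstacle arises: the only points that need to be checked are that $g$ is genuinely an order function (the strict increase of $m_k$ forces unboundedness) and that the ratio blows up along $(n_k)$, which is immediate since $g\circ\pi$ grows at most linearly in $k$ along the witnesses while $h$ grows exponentially there. The only mild subtlety, if any, is that one should not try to make $g$ respond to the actual behavior of $\pi$ (which could be bounded): decoupling $g$ from $\pi$ via the prescribed thresholds avoids any case analysis on whether $\pi$ is bounded.
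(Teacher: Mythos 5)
Your proposal is correct and takes essentially the same approach as the paper, which simply notes that the corollary follows ``by making $g$ grow slowly enough'' to satisfy the hypothesis of Theorem~\ref{low-DNR-hierarchy}; you have just spelled out one concrete construction of such a $g$.
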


			Additionally, we have:
			\begin{corollary}\label{dnr-low-above}
				For every order function $g$ there is an order function $h$ such that there is a low $\DNR_h$ that computes no $\DNR_g$.
			\end{corollary}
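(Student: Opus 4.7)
The plan is to invoke Theorem~\ref{low-DNR-hierarchy} with the given $g$ together with an order function $h$ tailored to $g$, so that its hypothesis that $h(n)/g(\pi(n))$ is unbounded is met. Unlike in Corollary~\ref{dnr-low-below}, here $\pi = \pi_h$ depends on $h$, so one cannot simply fix $h$ and then choose $g$ to grow slowly; this circularity must be broken.

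I would break it with the recursion theorem. Because the construction of $\pi$ from $h$ in Theorem~\ref{low-DNR-hierarchy} is uniform, there is a partial recursive function $F(a,n)$ such that whenever $\varphi_a$ is a total order function $h$, we have $F(a,n) = \pi_h(n)$; moreover $F(a, n+1)$ converges as soon as $\varphi_a$ has halted on inputs $0, \ldots, n$, since computing $\pi_h(n+1)$ only requires enumerating the finite set $h^{n+1}$ and applying the $s$-$m$-$n$ theorem. By the recursion theorem, fix an index $a$ such that $\varphi_a$ satisfies the recursion
\[
\varphi_a(0) = 2, \qquad \varphi_a(n+1) = \max\bigl(\varphi_a(n),\ (n+1)\cdot g(F(a, n+1))\bigr) + 1.
\]
An induction on $n$ confirms that $\varphi_a$ is total; set $h = \varphi_a$. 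Then $h$ is recursive, nondecreasing, and unbounded (the factor $n+1$ ensures $h(n) \to \infty$), with $h(0) = 2$, so $h$ is an order function.

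Since $\varphi_a = h$ is a total order function, $F(a, \cdot) = \pi_h$, and the recursion then gives $h(n) \ge n \cdot g(\pi_h(n))$ for all $n \ge 1$. Hence $h(n)/g(\pi_h(n)) \ge n$ is unbounded, and Theorem~\ref{low-DNR-hierarchy} delivers the desired low $\DNR_h$ function that computes no $\DNR_g$ function.

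The main obstacle I anticipate is making the recursion-theorem application rigorous, specifically showing that the partial function $F(a, \cdot)$ is defined in time for each stage and that the resulting $\varphi_a$ is genuinely total. This is handled inductively: stage $n+1$ needs only the convergence of $\varphi_a$ on inputs $\le n$, which the previous stages supply.
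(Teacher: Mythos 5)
Your proof is correct and follows the same route as the paper's: use the uniformity of $\pi$ in $h$ together with the recursion theorem to build $h$ "knowing its own index," so that $\pi_h$ is available during the definition of $h$ and the ratio $h(n)/g(\pi_h(n))$ can be forced to be unbounded. You have merely made explicit the details (the function $F(a,n)$, the inductive totality argument) that the paper's one-line proof leaves implicit.
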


			\begin{proof} Using the uniformity in Theorem~\ref{low-DNR-hierarchy} along with the recursion theorem, we construct $h$ knowing its index in advance, thereby obtaining $\pi$, and ensuring that $h(n)/g(\pi(n))$ is unbounded. 
			\end{proof}

			By combining the strategies for the two corollaries above, we get:
		
			\begin{corollary}
				For every order function $h$ there is an order function $g$ such that there is a low $f_1 \in \DNR_g$ that computes no $\DNR_h$ as well as a low $f_2 \in \DNR_h$ that computes no $\DNR_g$.
			\end{corollary}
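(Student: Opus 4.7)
The plan is to run the constructions behind Corollaries~\ref{dnr-low-below} and \ref{dnr-low-above} against a single $g$. First I would apply Theorem~\ref{low-DNR-hierarchy} to $h$ to obtain the recursive function $\pi_h$. Then, by the recursion theorem, I would define $g$ while already knowing an index for it, and hence knowing the recursive function $\pi_g$ that Theorem~\ref{low-DNR-hierarchy} would assign to $g$. The goal of the construction is to arrange both of the following simultaneously:
\begin{enumerate}
\item[(i)] $g(n)/h(\pi_g(n))$ is unbounded, and
\item[(ii)] $h(n)/g(\pi_h(n))$ is unbounded.
\end{enumerate}
Granted (i), Theorem~\ref{low-DNR-hierarchy} applied with $g$ as the bound on the low function and $h$ as the avoided bound furnishes a low $f_1 \in \DNR_g$ computing no $\DNR_h$; granted (ii), the same theorem applied with $h$ as the bound and $g$ as the avoided bound furnishes the desired low $f_2 \in \DNR_h$ computing no $\DNR_g$.

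The construction of $g$ would proceed in stages, alternating between the two requirements while preserving monotonicity. At stage $2k$, aiming at (ii), I would choose $n_k$ so large that $h(n_k) \geq k \cdot g(m_{k-1})$, where $m_{k-1}$ is the witness used for (i) at the previous odd stage, and keep $g$ constant on the interval from $m_{k-1}+1$ up to $\pi_h(n_k)$, so that $g(\pi_h(n_k)) = g(m_{k-1}) \leq h(n_k)/k$. At stage $2k+1$, aiming at (i), I would pick $m_k > \pi_h(n_k)$, use the known index for $g$ to compute $\pi_g(m_k)$, and set $g(m_k) \geq k \cdot h(\pi_g(m_k))$, leaving $g$ constant on the intervening values. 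This produces a recursive, nondecreasing, unbounded $g$ bearing witnesses to both (i) and (ii) at the stages $m_k$ and $\pi_h(n_k)$ respectively.

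The main obstacle is reconciling the ``slow growth'' of $g$ demanded by (ii) at the points $\pi_h(n_k)$ with the ``fast growth'' demanded by (i) at the points $m_k$, under the constraint that $g$ is nondecreasing. The resolution is asymmetric in the two requirements. Condition (ii) only constrains $g$ at the specific recursive locations $\pi_h(n_k)$, which can be pushed arbitrarily far out while $h(n_k)$ grows without bound; the previous jump needed for (i) has size only $(k-1) \cdot h(\pi_g(m_{k-1}))$, which is a recursive quantity we can beat by choosing $n_k$ sufficiently large. Condition (i) lets us select the jump points $m_k$ ourselves, always placing them strictly after $\pi_h(n_k)$, so no earlier slow interval is violated. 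The potential circularity between $g$ and $\pi_g$ is resolved by the recursion theorem fixing the index of $g$ before the staged definition begins, so that $\pi_g(m_k)$ is a computable quantity available at stage $2k+1$.
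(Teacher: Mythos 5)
Your proposal is correct and takes exactly the route the paper intends: the paper gives no proof beyond the one-line remark ``by combining the strategies for the two corollaries above,'' and your construction spells out precisely that combination, running the slow-growth strategy of Corollary~\ref{dnr-low-below} (condition (ii), with $\pi_h$ known in advance) interleaved with the recursion-theorem strategy of Corollary~\ref{dnr-low-above} (condition (i), with $\pi_g$ obtained from the fixed-point index). One small point worth making explicit: the reason computing $\pi_g(m_k)$ at stage $2k+1$ terminates is not the recursion theorem alone but the fact that $\pi_g(m_k)$ only consults $g\res m_k$, which the staged definition has already fixed by that point -- so the self-reference is well-founded; also, one should take $g(m_k)$ to be the maximum of the demanded value $k\cdot h(\pi_g(m_k))$ and $g(m_k-1)$ to guarantee monotonicity, and set $g(0)\ge 2$ so that $g$ is an order function.
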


			\begin{corollary}\label{jan30-also}
				Given any order function $h$ there is an order function $g$ such that there is an $\SNR_h$ that computes no $\SNR_g$.
			\end{corollary}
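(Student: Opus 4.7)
The plan is to transfer the $\DNR$-hierarchy result of Corollary~\ref{dnr-low-below} to the $\SNR$ setting via Theorem~\ref{rec-dnr-to-rec-snpr} and Corollary~\ref{rec-snr-to-rec-dnr}.

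First, given the order function $h$, I would apply Theorem~\ref{rec-dnr-to-rec-snpr} to obtain an order function $h^*$ such that every $\DNR_{h^*}$ function computes an $\SNPR_h$ function. Then, applying Corollary~\ref{dnr-low-below} to $h^*$, I would obtain an order function $g$ together with a low $F \in \DNR_{h^*}$ that computes no $\DNR_g$ function. By the choice of $h^*$, $F$ computes some $f \in \SNPR_h \subseteq \SNR_h$, and this $f$ is the intended witness.

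To verify it, suppose for contradiction that $f$ computes some $j \in \SNR_g$. Then $j \le_T f \le_T F$, so since non-highness is downward-closed under Turing reducibility (by monotonicity of the jump), $j$ is non-high. Corollary~\ref{rec-snr-to-rec-dnr} would then yield a $\DNR_g$ function computable from $j$, and hence from $F$, contradicting the defining property of $F$.

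The subtle point --- what might be called the main obstacle --- is the order of operations: one must insist on $F$ being \emph{low} (not merely some $\DNR_{h^*}$ function) before extracting $f$, because the only backward translation from $\SNR_g$ to $\DNR_g$ available (Corollary~\ref{rec-snr-to-rec-dnr}) applies only to non-high functions. Apart from this alignment, the argument is a clean composition of the previously established results.
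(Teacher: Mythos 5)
Your proposal is correct and follows essentially the same route as the paper: apply Theorem~\ref{rec-dnr-to-rec-snpr} to get $h^*$, Corollary~\ref{dnr-low-below} to get $g$ and a low $\DNR_{h^*}$ witness, push forward to an $\SNR_h$ function, and use Corollary~\ref{rec-snr-to-rec-dnr} (via downward closure of non-highness) to rule out any computed $\SNR_g$ function. The ``subtle point'' you flag about needing a low (hence non-high) witness is indeed the key alignment, and the paper handles it the same way.
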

			\begin{proof}
				By Theorem~\ref{rec-dnr-to-rec-snpr}, there is an order function $h'$ such that any $\DNR_{h'}$ computes an $\SNR_h$. By Corollary~\ref{dnr-low-below}, there is an order function $g$ such that there is a low $\DNR_{h'}$ function $f'$ that computes no $\DNR_g$ function. Then $f'$ computes an $\SNR_h$ function $f$ that computes no $\SNR_g$ function: if $j$ is recursive in $f$ and is an $\SNR_g$ function and since it is low, it is itself $\DNR_g$ by Corollary~\ref{rec-snr-to-rec-dnr}, a contradiction.
			\end{proof}

			\begin{corollary}\label{jan30-2018}
				Given any order function $g$ there is an order function $h$ such that there is an $\SNR_h$ that computes no $\SNR_g$.
			\end{corollary}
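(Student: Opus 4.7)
The plan is to mirror the proof of Corollary~\ref{jan30-also}, but with the roles of the two order functions swapped: we now fix $g$ first as the bound we wish to evade, and we build a larger $h$ for which there is a witnessing $\SNR_h$ function. The ingredients needed to do this are exactly available: Corollary~\ref{dnr-low-above} lets us push the $\DNR$ bound upward (given $g$, produce some $h'$ with a low $\DNR_{h'}$ computing no $\DNR_g$), and Theorem~\ref{rec-dnr-to-rec-snrp-2} converts any $\DNR$ bound into an $\SNPR$ bound in the same direction (given $h'$, produce $h$ so that every $\DNR_{h'}$ computes an $\SNPR_h$).

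First I would apply Corollary~\ref{dnr-low-above} to the given $g$ to obtain an order function $h'$ and a \emph{low} $\DNR_{h'}$ function $f'$ such that $f'$ computes no $\DNR_g$ function. Next I would apply Theorem~\ref{rec-dnr-to-rec-snrp-2} to $h'$ to obtain an order function $h$ such that every $\DNR_{h'}$ function computes an $\SNPR_h$ function; in particular, $f'$ computes some $\SNR_h$ function $f$. This $h$ is the order function we advertise, and $f$ is the candidate $\SNR_h$ witness.

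To finish, suppose for contradiction that $f$ computes some $\SNR_g$ function $j$. Because $j \le_T f \le_T f'$ and $f'$ is low, $j$ is itself low, hence certainly not high. Corollary~\ref{rec-snr-to-rec-dnr} then produces from $j$ a $\DNR_g$ function, which is Turing-reducible to $f'$. This contradicts the defining property of $f'$, so no such $j$ can exist, and $f$ is an $\SNR_h$ function computing no $\SNR_g$ function.

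I do not anticipate a genuine obstacle here: the proof is a direct dualization of Corollary~\ref{jan30-also}, and the only point to double-check is the direction of the reductions in Corollary~\ref{dnr-low-above} (starting from $g$, producing a larger $h'$) versus Theorem~\ref{rec-dnr-to-rec-snrp-2} (starting from a $\DNR$ bound $h'$, producing an $\SNPR$ bound $h$), both of which move in the direction required to chain together as above.
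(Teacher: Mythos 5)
Your proposal is correct and matches the paper's proof essentially line for line: apply Corollary~\ref{dnr-low-above} to $g$ to get a low $\DNR_{h'}$ function $f'$ computing no $\DNR_g$ function, use Theorem~\ref{rec-dnr-to-rec-snrp-2} to get $h$ and an $\SNR_h$ function $f\le_T f'$, and then argue that any $\SNR_g$ function $j\le_T f$ would be low (hence non-high) and so would compute a $\DNR_g$ function by Corollary~\ref{rec-snr-to-rec-dnr}, contradicting the choice of $f'$. You spell out the non-high step slightly more carefully than the paper does, which is a small improvement in exposition rather than a different argument.
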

			\begin{proof}
				By Corollary~\ref{dnr-low-above}, there is an $h'$ and a low $\DNR_{h'}$ function $f'$ that computes no $\DNR_g$ function. By Theorem~\ref{rec-dnr-to-rec-snrp-2} there is an $h$ such that $f'$ computes an $\SNR_h$ function $f$. Then $f$ cannot compute an $\SNR_g$ function since the latter would be $\DNR_g$ by Corollary~\ref{rec-snr-to-rec-dnr}.
			\end{proof}

			Let $\mathfrak O$ denote the set of all order functions.
			Recall the Muchnik and Medvedev reducibilities of mass problems:
			\begin{definition}
				A mass problem $\mathcal A$ is Muchnik reducible to a mass problem $\mathcal B$, written $\mathcal A\le_w\mathcal B$ and sometimes read ``weakly reducible'', if for each $B\in\mathcal B$, there is an $A\in\mathcal A$ such that $A\le_T B$, where $\le_T$ is Turing reducibility. If there is a single Turing reduction $\Phi$ such that for all $B\in\mathcal B$, $\Phi^A\in\mathcal A$ then $\mathcal A$ is Medvedev reducible to $\mathcal B$, written $\mathcal A\le_S\mathcal B$ and sometimes read ``strongly reducible''.
			\end{definition}
			We can phrase Corollaries \ref{jan30-2018} and \ref{jan30-also} as follows:
			\[
				\forall h\in\mathfrak O\quad\exists g\in\mathfrak O\quad\SNR_g\not\le_w\SNR_h;
			\]
			\[
				\forall g\in\mathfrak O\quad\exists h\in\mathfrak O\quad\SNR_g\not\le_w\SNR_h.
			\]
			Thus, the Muchnik degrees of various mass problems $\SNR_f$ have no least or greatest element.

			\section{Canonical immunity}
			Canonical immunity was introduced by three of the present authors in \cite{MR3629746} and shown there to be equivalent, as a mass problem, to SNR, and studied further in \cite{2017arXiv170703947B}. Here we give a new Theorem \ref{bjoernFall2017} below, analogous to the case of DNR, that was not obtained in \cite{MR3629746}.
			
			Considering lowness notions associated with Schnorr randomness was what lead those authors to this new notion of immunity.

			\begin{definition}
			A \emph{canonical numbering of the finite sets} is a surjective function $D:\omega\rightarrow \{A:A\subseteq\omega\text{ and $A$ is finite}\}$ such that
			$\{(e,x): x\in D(e)\}$ is recursive and the cardinality function $e\mapsto |D(e)|$, or equivalently, $e\mapsto\max D(e)$, is also recursive.
		\end{definition}
			We write $D_e=D(e)$.
			\begin{definition}
				$R$ is \emph{canonically immune} ($\CI$) if $R$ is infinite and
				there is a recursive function $h$ such that for each canonical
				numbering of the finite sets $D_e$, $e\in\omega$,
				we have that for all but finitely many $e$, if $D_e\subseteq R$
				then $|D_e|\le h(e)$.
			\end{definition}
			We include proofs of some of the results from \cite{MR3629746}.
			\begin{theorem}[Beros, Khan, and Kjos-Hanssen {\cite{MR3629746}}]\label{lateApril}
				Schnorr randoms are canonically immune.
			\end{theorem}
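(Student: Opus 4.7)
The plan is to prove the contrapositive: if an infinite set $R$ is not canonically immune, then it is not Schnorr random. Concretely, I would fix an explicit recursive witness $h$, say $h(e) = 2\lceil \log_2(e+2)\rceil$, and show that Schnorr random sets are $h$-canonically immune. (Infiniteness of $R$ is automatic, since the set of reals with only finitely many $1$'s is a null $\Pi^0_2$ set that any Schnorr random must avoid.)

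Suppose for contradiction that $R$ is Schnorr random but there is a canonical numbering $D_\bullet$ and an infinite set $E = \dset{e}{D_e \subseteq R \text{ and } |D_e| > h(e)}$. I would build a Schnorr test $(V_n)_{n\in\omega}$ capturing $R$. The key estimate is $\mu([D_e]) = 2^{-|D_e|} < 2^{-h(e)} \le (e+2)^{-2}$, where $[D_e] = \dset{X \in 2^\omega}{D_e \subseteq X}$. Since $\sum_e 2^{-h(e)}$ is a computable convergent series, I can recursively pick a threshold $N_n$ with $\sum_{e \ge N_n} 2^{-h(e)} \le 2^{-n}$ and set
\[
V_n = \bigcup \dset{[D_e]}{e \ge N_n \text{ and } |D_e| > h(e)}.
\]
Because the cardinality function $e\mapsto |D_e|$ is recursive, the sequence $(V_n)$ is uniformly r.e. open, and $\mu(V_n) \le 2^{-n}$. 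Since $E$ is infinite, $R \in V_n$ for every $n$.

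The main technical obstacle is verifying that $(V_n)$ is actually a Schnorr test, i.e.\ that $\mu(V_n)$ is uniformly computable (not merely lower-semicomputable, which any r.e.\ open yields). I would handle this by a tail-error argument: approximate $\mu(V_n)$ from below by enumerating the qualifying indices $e < k$ and taking the exact measure of the finite union $\bigcup_{e<k,\,e\ge N_n,\,|D_e|>h(e)}[D_e]$ via inclusion–exclusion on the finitely many atoms of the Boolean algebra generated. The error at stage $k$ is bounded by $\sum_{e \ge \max(k,N_n)} 2^{-h(e)}$, a computable tail that converges effectively to $0$, so $\mu(V_n)$ is uniformly computable in $n$. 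This contradicts Schnorr randomness of $R$, completing the argument.
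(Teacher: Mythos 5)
Your proof is correct and follows essentially the same strategy as the paper's: fix the canonical numbering $D$, use the recursiveness of $e\mapsto|D_e|$ to build a Schnorr test whose levels consist of the cylinders $[D_e]$ for $e$ with $|D_e|$ exceeding the threshold, and read off canonical immunity from passing the test. The paper simply takes $h(e)=2e$ and $U_c=\{X:(\exists e>c)(|D_e|\ge 2e \wedge D_e\subset X)\}$, with the same one-line justification (recursiveness of $e\mapsto|D_e|$ gives computability of $\mu(U_c)$) that you spell out more carefully in your tail-error paragraph; your choice of a slower $h$ and the explicit contradiction framing are cosmetic variations rather than a genuinely different route.
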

			\begin{proof}
				Fix a canonical numbering of the finite sets, $\{D_e\}_{e \in \omega}$.
				Define $U_c = \{ X : (\exists e > c) \big( |D_e| \geq 2e \wedge D_e \subset X \big) \}$.
				Since $e \mapsto |D_e|$ is recursive, $\mu(U_c)$ is recursive and bounded by $2^{-c}$.
				Thus, the sequence $\{U_c\}_{c \in \omega}$ is a Schnorr test.
				If $A$ is a Schnorr random, then $A \in U_c$ for only finitely many $c \in \omega$.
				We conclude that $A$ is canonically immune.
			\end{proof}
			\begin{theorem}[Beros, Khan, and Kjos-Hanssen {\cite{MR3629746}}]
				Each canonically immune set is immune.
			\end{theorem}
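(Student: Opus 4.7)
The plan is to prove the contrapositive: if $R$ is not immune, then $R$ is not canonically immune. Since the definition of $\CI$ already demands $R$ infinite, the interesting case is that $R$ is infinite but contains an infinite recursive subset $S$. I will show that no recursive $h$ can witness canonical immunity for such an $R$ by building, for any candidate $h$, a canonical numbering along which the bound $|D_e|\le h(e)$ fails infinitely often while $D_e\subseteq R$.

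First I would enumerate $S$ recursively in increasing order as $s_0<s_1<s_2<\cdots$, and fix any standard canonical numbering $\{E_n\}_{n\in\omega}$ of the finite sets, for instance the one coming from binary expansions. Split $\omega$ into the infinite recursive sets $I=\{2n:n\in\omega\}$ and $J=\{2n+1:n\in\omega\}$, and let $\beta\colon J\to\omega$ be the obvious recursive bijection. Define
\[
D_e=\begin{cases}\{s_0,s_1,\ldots,s_{h(e)+1}\},&e\in I,\\ E_{\beta(e)},&e\in J.\end{cases}
\]
On $I$ we plant a subset of $R$ of size $h(e)+2$; on $J$ we relay the surjective listing $E$, so that every finite set still appears as some $D_e$. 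The relation $\{(e,x):x\in D_e\}$ and the cardinality map $e\mapsto|D_e|$ are both recursive, since we decide the parity of $e$ and then either compute $s_0,\ldots,s_{h(e)+1}$ uniformly from $h$ and the recursive enumeration of $S$, or consult $E$ via $\beta$.

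For every $e\in I$ we have $D_e\subseteq S\subseteq R$ yet $|D_e|=h(e)+2>h(e)$; hence the implication $D_e\subseteq R\Rightarrow|D_e|\le h(e)$ fails on the infinite set $I$, contradicting the assumption that $h$ witnesses canonical immunity. Since $h$ was arbitrary, $R$ is not $\CI$. The only mild subtlety is preserving surjectivity of the new numbering while reserving infinitely many indices for the planted sets; interleaving with the standard numbering $E$ through the bijection $\beta$ handles this uniformly, so I do not foresee a real obstacle.
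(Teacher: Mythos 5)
Your proof is correct and follows essentially the same route as the paper's: given an infinite recursive subset and a candidate $h$, plant sets of more than $h(e)$ elements of that subset at the even indices while using the odd indices to keep the numbering surjective and canonical. The paper states this a bit more tersely (planting $R_{h(2n)+1}$ at index $2n$), but the construction and the logic of refuting an arbitrary witness $h$ are the same.
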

			\begin{proof}
				Suppose $A$ has an infinite recursive subset $R$. Let $h$ be any recursive function.
				Let $R_n$ denote the set of the first $n$ elements of $R$, and let $\{D_e: e \in \omega\}$ be a canonical numbering of the finite sets such that $D_{2n} = R_{h(2n)+1}$ for all $n \in \omega$.
				For all $n$, $D_{2n}\subseteq R\subseteq A$ and $|D_{2n}| = h(2n)+1 > h(2n)$, and so $h$ does not witness the canonical immunity of $A$.
			\end{proof}

			We now show that canonically immune is the ``correct'' analogue
			of effectively immune. Let $W_0$, $W_1$, $W_2$, ... be an effective enumeration of the recursively enumerable (or r.e.) sets of natural numbers. An infinite set $A$ of natural numbers is said to be \emph{immune} if it contains no infinite r.e. subset. It is said to be \emph{effectively immune} when there is a recursive function $f$ such that for all $e$, if $W_e$ is a subset of $A$, then $|W_e| \le f(e)$. The interest in sets whose immunity is effectively witnessed in this manner originally arose in the search for a solution to Post's problem; for more on this the reader may see \cite{MR3629746}.

			\begin{theorem}[Beros, Khan, and Kjos-Hanssen {\cite{MR3629746}}]\label{may3}
				Each canonically immune ($\CI$) set computes a strongly nonrecursive
				function, i.e.,
				\[
					\SNR\le_w\CI.
				\]
			\end{theorem}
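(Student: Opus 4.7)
My plan is to show that from any $R \in \CI$ with recursive witness $h'$, one can compute a function $f \in \SNR$. The intuition is to pack enough of $R$ into each value $f(e)$ that any recursive $g$ agreeing with $f$ infinitely often will, through these agreements, produce a recursive canonical numbering of finite subsets of $R$ whose cardinalities outgrow $h'$, contradicting canonical immunity.

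Concretely, I would enumerate $R$ in increasing order as $r_0 < r_1 < r_2 < \cdots$, set $h(e) = h'(2e) + 1$, and define $f(e)$ to be the code, under a fixed effective tuple-coding, of $\langle r_0, \ldots, r_{h(e)} \rangle$. Since $h$ is recursive and the $r_i$ are $R$-recursive, $f \le_T R$. To verify that $f$ is SNR, I would fix an arbitrary total recursive $g$ and construct a canonical numbering $\{D_e\}$ as follows: on odd indices $2e+1$, use a fixed standard canonical numbering of all finite sets, to guarantee surjectivity; on each even index $2e$, decode $g(e)$ as a tuple, and set $D_{2e}$ to be the set of its entries provided that tuple has length exactly $h(e)+1$ and consists of pairwise distinct values, letting $D_{2e} = \emptyset$ otherwise.

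The verification that $\{D_e\}$ is a genuine canonical numbering is routine: the membership relation is recursive because $g$, $h$, and the length/distinctness checks are, and the cardinality function is recursive because on even indices $|D_{2e}|$ is either $0$ or $h(e)+1$, determined by a decidable condition. Once that is in place, the contradiction is pigeonhole. If $f(e) = g(e)$ held for infinitely many $e$, then for each such $e$ the tuple coded by $g(e)$ would be exactly the first $h(e)+1$ elements of $R$, giving $D_{2e} \subseteq R$ with $|D_{2e}| = h(e)+1 = h'(2e) + 2 > h'(2e)$; this would supply infinitely many indices (the $2e$'s in question) violating the immunity inequality for $h'$, contradicting that $R$ is canonically immune with witness $h'$.

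The main point needing care is the verification that $\{D_e\}$ truly meets the definition of a canonical numbering, in particular that the cardinality function is recursive; this is why I split the construction across parities and feed $h$ a decidable test. The only other choice that matters is the $+1$ in $h(e) = h'(2e)+1$, which is what converts ``at least $h'(2e)$'' into the strict inequality needed against canonical immunity. Beyond those two bookkeeping items I do not anticipate any substantial obstacle; the argument is a short diagonalization using only the definitions of $\CI$ and $\SNR$.
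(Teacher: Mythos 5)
The paper does not actually contain a proof of Theorem~\ref{may3}; it is stated as a citation to \cite{MR3629746} and immediately followed by a remark, so there is no in-paper argument to compare against. That said, your argument is correct, self-contained, and is precisely the kind of direct diagonalization one would expect: pack an initial segment of $R$ of controlled length into $f(e)$, and turn any recursive $g$ agreeing with $f$ infinitely often into a canonical numbering (on the even indices, with a standard numbering on the odds for surjectivity) whose subsets of $R$ are too large for the CI witness $h'$. The split across parities and the explicit ``right length, distinct entries'' test are exactly what is needed to make the cardinality function recursive, which is the one place the definition of canonical numbering has teeth. One tiny redundancy: with $h(e)=h'(2e)+1$ your tuple $\langle r_0,\dots,r_{h(e)}\rangle$ has $h(e)+1=h'(2e)+2$ entries, so you overshoot the bound $h'(2e)$ by two rather than one; harmless, but the $+1$ in the definition of $h$ and the $+1$ from counting indices $0,\dots,h(e)$ are doing the same job twice.
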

			Incidentally, Beros and Beros \cite{2016arXiv161001650B} showed that the index set of Medvedev reductions from $\CI$ to $\SNR$ is $\Pi^1_1$-complete.
			
			\begin{theorem}[{Kjos-Hanssen \cite{MR2813422}}]\label{bjoernCT}
				Each SNR function is either of high or DNR Turing degree.
			\end{theorem}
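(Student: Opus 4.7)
The plan is to prove this as a near-corollary of the earlier theorem that every non-high $\SNR$ function is $\SNPR$. Given an $\SNR$ function $f$, I split into two cases based on whether $f$ is of high Turing degree. If $f$ is high, the conclusion holds trivially, so the substantive case is when $f$ is not high.

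Assume then that $f$ is $\SNR$ and not high. Applying the Kjos-Hanssen--Merkle--Stephan theorem stated earlier in the excerpt, $f$ is in fact $\SNPR$: for every partial recursive function $\psi$, $f(n) \ne \psi(n)$ for all but finitely many $n$ in $\dom(\psi)$. In particular, taking $\psi(n) = \varphi_n(n)$ (the universal diagonal partial recursive function), we see that $f$ is \emph{almost $\DNR$} in the sense already noted in the excerpt: there is a finite set $F \subseteq \omega$ such that for every $n \notin F$, if $\varphi_n(n) \converges$ then $f(n) \ne \varphi_n(n)$.

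From an almost $\DNR$ function one obtains a genuine $\DNR$ function by a finite patch. Define $g \le_T f$ by $g(n) = f(n)$ for $n \notin F$ and $g(n) = c_n$ for $n \in F$, where $c_n$ is any value different from $\varphi_n(n)$ whenever that converges (e.g.\ $c_n = \varphi_n(n) + 1$ if $\varphi_n(n)\converges$, and $c_n = 0$ otherwise). Since $F$ is finite, the finite table of values $\{(n, c_n) : n \in F\}$ can be hardcoded into the Turing functional computing $g$ from $f$, so $g$ is recursive in $f$. By construction $g$ is $\DNR$, so the Turing degree of $f$ is a $\DNR$ degree, completing the proof.

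There is essentially no obstacle here beyond correctly invoking the earlier theorem; the only mild point of care is that ``DNR Turing degree'' must be understood as ``degree computing some DNR function'', so that the finite patch of an almost DNR function genuinely witnesses the conclusion.
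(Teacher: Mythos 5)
Your proof is correct, and it is essentially the route the paper implicitly sets up: the paper proves the Kjos-Hanssen--Merkle--Stephan theorem (non-high $\SNR\Rightarrow\SNPR$), observes immediately after the definitions that $\SNPR$ implies almost $\DNR$ via $\psi(n)=\varphi_n(n)$, and then records Corollary~\ref{rec-snr-to-rec-dnr} (non-high $\SNR_h$ computes $\DNR_h$) without writing out the finite-patch step. You have simply assembled these ingredients and spelled out the patch, correctly noting that the finite table $\{(n,c_n):n\in F\}$ is hardcoded non-uniformly into the functional; no further justification is needed.
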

			\begin{corollary}[Beros, Khan, and Kjos-Hanssen {\cite{MR3629746}}]\label{borrowed}
				The following are equivalent for an oracle $A$:
				\begin{enumerate}
					\item $A$ computes a canonically immune set,
					\item $A$ computes an SNR function,
					\item $A$ computes an infinite subset of a Schnorr random.
				\end{enumerate}
			\end{corollary}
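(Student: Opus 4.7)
The plan is to close the cycle $(1) \Rightarrow (2) \Rightarrow (3) \Rightarrow (1)$ using the results already assembled in this section.

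The implication $(1) \Rightarrow (2)$ is exactly Theorem~\ref{may3}, which gives the Muchnik reduction $\SNR \le_w \CI$: every oracle computing a canonically immune set computes an SNR function.

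For $(3) \Rightarrow (1)$, I would combine Theorem~\ref{lateApril} with the observation that canonical immunity is inherited by infinite subsets. Explicitly, if $h$ witnesses canonical immunity of $R$ and $S \subseteq R$ is infinite, then for any canonical numbering $\{D_e\}$ and any index $e$ with $D_e \subseteq S$, we also have $D_e \subseteq R$, hence $|D_e| \le h(e)$ for cofinitely many such $e$; the same $h$ therefore witnesses canonical immunity of $S$. Consequently, an oracle that computes an infinite subset of a Schnorr random computes a canonically immune set.

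The substantive implication is $(2) \Rightarrow (3)$. Applying Theorem~\ref{bjoernCT}, an oracle $A$ computing an SNR function is either of DNR Turing degree or high. In the DNR case I would invoke the theorem of Kjos-Hanssen that every DNR function uniformly computes an infinite subset of a Martin-L\"of random set, and note that every Martin-L\"of random is Schnorr random. In the high case I would appeal to the known fact that every high Turing degree computes a Schnorr random, so that $A$ itself computes a Schnorr random set, which is trivially an infinite subset of itself. In either subcase, $A$ computes an infinite subset of a Schnorr random, closing the cycle.

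The main obstacle is the high case of $(2) \Rightarrow (3)$: while the DNR case is a clean black-box citation, the high case rests on a separate argument that uses a function dominating every recursive function to race past the recursive measure bounds in a universal Schnorr test and construct a set avoiding all but finitely many of its components. Once both ingredients are in hand, the corollary follows from the assembled theorems with no further work, and justifies the slogan that canonical immunity is to SNR what effective immunity is to DNR.
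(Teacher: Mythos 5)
Your cycle $(1)\Rightarrow(2)\Rightarrow(3)\Rightarrow(1)$ is correct. Note that the paper itself does not reprove Corollary~\ref{borrowed}; it cites it as a borrowed result from \cite{MR3629746}, so there is no in-text proof to compare against. That said, your reconstruction is sound: $(1)\Rightarrow(2)$ is Theorem~\ref{may3}; $(3)\Rightarrow(1)$ combines Theorem~\ref{lateApril} with the (correct) observation that canonical immunity passes down to infinite subsets, since any witness $h$ for $R$ still works for $S\subseteq R$ because $D_e\subseteq S$ implies $D_e\subseteq R$; and $(2)\Rightarrow(3)$ splits via Theorem~\ref{bjoernCT} into the DNR case (handled by Kjos-Hanssen's Muchnik equivalence between DNR and infinite subsets of \ML\ randoms, then downgrading \ML\ to Schnorr) and the high case (handled by the Nies--Stephan--Terwijn theorem that every high degree contains a Schnorr random, which is trivially an infinite subset of itself). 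Both external ingredients are standard and correctly stated. The one place to be slightly more careful in a written-up version is that the DNR-to-infinite-subset-of-\ML-random reduction is a Muchnik (degree-wise) reduction rather than a single uniform procedure, but since the corollary is phrased in terms of Turing computation from an oracle, Muchnik-level reductions are exactly what is needed, so the argument goes through.
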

			If a canonically immune set $A$ is of non-high Turing degree then by Theorem \ref{may3}, $A$ computes an SNR function, which
			by Theorem \ref{bjoernCT} means that $A$ computes a DNR function, hence $A$ computes an effectively immune set. Our new result is to
			make this more direct: $A$ is itself that effectively immune set.

			\begin{theorem}\label{bjoernFall2017}
			If $A$ is non-high and canonically immune, then $A$ is effectively immune.
			\end{theorem}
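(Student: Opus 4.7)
The plan is to combine the canonical immunity witness $h$ with a $\DNR$ function that $A$ computes under the non-highness hypothesis. By Theorem~\ref{may3}, $A$ computes an $\SNR$ function which, since $A$ is non-high, must by Theorem~\ref{bjoernCT} be of $\DNR$ Turing degree; hence $A$ computes a $\DNR$ function $d$.

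For the main construction, consider the canonical numbering $D$ defined by letting $D_{\la e, k, s\ra}$ be the set of the first $k$ elements enumerated into $W_{e,s}$ when $|W_{e,s}|\ge k$, and $\emptyset$ otherwise; this is canonical because both its cardinality ($k$ or $0$) and its elements are uniformly recursive in $\la e, k, s\ra$. Applying the canonical immunity of $A$ to $D$ yields a finite exceptional set $E\subseteq\omega$ such that for $\la e, k, s\ra\notin E$, if the first $k$ elements of $W_{e,s}$ all lie in $A$, then $k\le h(\la e, k, s\ra)$. Since $E$ is finite, there exists a natural number $N$ with $s\le N$ for every $\la e, k, s\ra\in E$.

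I would then define $f(e)$ to be the least $k$ such that some $s>N$ satisfies $|W_{e,s}|\ge k$ and $k>h(\la e, k, s\ra)$. When $f(e)$ is defined, the contrapositive of the canonical immunity bound gives $|W_e|<f(e)$ whenever $W_e\subseteq A$: indeed, if $|W_e|\ge f(e)$ and $W_e\subseteq A$, then at the witnessing stage $s$ the first $f(e)$ elements of $W_{e,s}$ lie in $A$, forcing $f(e)\le h(\la e, f(e), s\ra)$, which contradicts the definition of $f(e)$. The function $f$ is recursive, since the only non-computable ingredient, $N$, is a fixed natural number rather than a function of the input.

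The main obstacle is ensuring $f(e)$ is defined for every $e$; if $h$ grows too fast relative to the pairing, the search for $k$ may fail to terminate. Here the $\DNR$ function $d$ enters, playing the role of a ``$0'$-replacement'' via a recursion-theoretic argument: for each $e$ we associate (via the recursion theorem) an index $n(e)$ such that $\varphi_{n(e)}(n(e))$ encodes a partial recursive guess at the canonical index of $W_e$ viewed as an $A$-subset, and the $\DNR$ property $d(n(e))\ne\varphi_{n(e)}(n(e))$, together with canonical immunity applied at the canonical index $d(n(e))$, is used to force $f$ to be total recursive. The delicate interplay between the $A$-computable $d$ and the purely recursive $f$ is the technical crux of the argument.
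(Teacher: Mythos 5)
Your construction of $f$ has a genuine totality problem, which you correctly flag but do not repair. You define $f(e)$ as the least $k$ for which some $s>N$ satisfies $|W_{e,s}|\ge k$ and $k>h(\la e,k,s\ra)$. This search need not terminate: take $W_e=\{0\}$, enumerated at stage~$1$, and note that any reasonable $h$ has $h\ge 1$ everywhere, so every $k\le |W_{e,s}|\le 1$ fails $k>h(\la e,k,s\ra)$. The same problem arises for any $W_e$ that is finite or grows slowly relative to $h$, and there is no recursive way to detect these cases and substitute a default value. So the ``direct'' route --- fix one canonical numbering, apply the $\CI$ bound, read off an effective-immunity witness --- breaks down before the non-highness of $A$ has even been used.

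Your proposed fix via a $\DNR$ function $d\le_T A$ does not close this gap, and in fact cannot. Effective immunity requires a \emph{recursive} bounding function; if $d$ (which is only $A$-recursive) appears in the definition of $f$, then $f$ is not recursive, and if $d$ is used only to ``argue'' totality, the argument still has to contend with the fact that the recursive search above genuinely diverges on some inputs. More fundamentally, $\DNR$ is the wrong manifestation of non-highness here. The paper argues by contrapositive: assuming $A$ is not effectively immune, it defines an $A$-recursive function $f$ (roughly, $f(d)$ is a stage $s$ by which some $W_{e,s}\#h(e)\subseteq A$ with $|W_{e,s}|>h(e)$ and $e>d$), uses non-highness in the form ``some recursive $g$ escapes every $A$-recursive function'' to obtain a recursive $g$ with $g(d)\ge f(d)$ infinitely often, and then builds a new recursive canonical numbering $D_{2\la e,d\ra}=W_{e,g(d)}\#h(d)$ whose sets defeat $h$ infinitely often, showing $A$ is not $\CI$. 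The escaping/domination characterization of non-highness --- not $\DNR$-ness --- is what lets one replace the non-recursive ``witnessing stage'' by a recursive overestimate, and that is exactly the ingredient missing from your sketch.
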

			\begin{proof}
			Let us introduce the notation
			\[W_{e,s}\#u\]
			to mean the set of $k\in W_{e,s}$ such that when $k$ enters $W_{e,s}$, at most $u$ other numbers have already entered $W_{e,s}$. Roughly speaking, $D\# u$ consists of the first $u+1$ elements of $D$.

			Let $A$ be non-high and not effectively immune. We need to show that $A$ is not canonically immune.

			Since $A$ is not effectively immune, there are infinitely many $e$ for which there is an $s$ with $\la e,s\ra\in\mathcal F$
			where
			\[
				\mathcal F=\{\la e,s\ra : |W_{e,s}|>h(e), \text{and }W_{e,s}\#h(e)\subseteq A\}.
			\]
			Here we assume $h$ is nondecreasing.
			So
			\[
				\forall d\quad\exists s\quad\exists e_d>d\quad \la e_d,s\ra\in\mathcal F
			\]
			Let $f(d)=s$. Then there is a recursive function $g(d)$ which is not dominated by $f$.

			We may assume $s>e_d$ since increasing $s$ will keep $\la e,s\ra\in\mathcal F$.
			Let
			\[D_{2\la e,d\ra}=W_{e,g(d)}\#h(d)\]
			Let $\tilde h(\la e,d\ra)=h(d)$ for $d\le e\le g(d)$. Using a recursive bijection between the domain of $\tilde h$,
			\[
			\{\la e,d\ra : d\le e\le g(d)\}
			\]
			and $\omega$, we may assume the domain of $\tilde h$ is $\omega$.

			Then for infinitely many $e$ (namely, there are infinitely many $d$ with $f(d)\le g(d)$, and for each such $d$ there is an $e$ with $d\le e\le g(d)$ that works) we have
			\begin{enumerate}
			\item $D_{2\la e,d\ra}=W_{e,g(d)}\#h(d)\subseteq W_{e,g(d)}\#h(e)\subseteq A$, and
			\item $|D_{2\la e,d\ra}|>\tilde h(\la e,d\ra)=h(d)$.
			\end{enumerate}

			We have $\lim_{n\to\infty}\tilde h(n)=\infty$ since for each $d$ we only include $e$ up to the $e_d$ above.

			Let the sets $D_{2k+1}$ be a canonical list of all the finite sets, just in case we missed some of them using the sets $D_{2k}$.
			\end{proof}

	\section{Pandemic numberings}

		Brendle et al.~\cite{MR3445411} explored an analogy between the theory of
		cardinal characteristics in set theory and highness properties in
		computability theory, following up on work of Rupprecht \cite{MR2660926}.

		Their main results concerned a version of Cichon's diagram \cite{MR1233917} in computability
		theory. They expressed the cardinal characteristics in Cichon's diagram
		as either
		\[
			\mathfrak{d}(R) = \min\{|F|: F\subseteq Y\text{ and }\forall x \in X\,\exists y \in F\, (x \mathrel{R} y)\},
		\]
		\[
			\mathfrak{d}(R) = \min\{|F|: F\subseteq Y\text{ and }\forall x \in X\,\exists y \in F\, (x \mathrel{R} y)\}
		\]
		or
		\[
			\mathfrak{b}(R) = \min\{|G|: G\subseteq X\text{ and }\forall y \in Y\,\exists x \in G\, \neg (x \mathrel{R} y)\},
		\]
		\[
			\mathfrak{b}(R) = \min\{|G|: G\subseteq X\text{ and }\forall y \in Y\,\exists x \in G\, \neg (x \mathrel{R} y)\},
		\]
		where $X$ and $Y$ are two spaces and $R$ is a relation on $X \times Y$.
		As the spaces considered admit a notion of
		relative computability, it is natural to say that an element $x\in X$ is computable (in $A \subset \omega$).

		They defined two computability-theoretic notions corresponding to
		$\mathfrak{d}(R)$ and $\mathfrak{b}(R)$ as follows:
		\[
			\mathcal{B}(R) = \{A: \exists y \leq_T A\, \forall x\, (x \text{ is	computable } \to x \mathrel{R} y)\},
		\]
		\[
			\mathcal{B}(R) = \{A: \exists y \leq_T A\, \forall x\, (x \text{ is	computable } \to x \mathrel{R} y)\},
		\]
		and
		\[
			\mathcal{D}(R) = \{A: \exists x \leq_T A\, \forall y\, (y \text{ is	computable } \to \neg(x \mathrel{R} y))\},
		\]
		\[
			\mathcal{D}(R) = \{A: \exists x \leq_T A\, \forall y\, (y \text{ is	computable } \to \neg(x \mathrel{R} y))\}.
		\]
		They found that $\mathcal{B}(R)$ and $\mathcal{D}(R)$ tend to be highness properties in computability theory, often
		equivalent to well-known notions.

		They finally mapped a cardinal characteristic $\mathfrak{d}(R)$ or $\mathfrak{b}(R)$ to
		$\mathcal{D}(R)$ or $\mathcal{B}(R)$ respectively, and showed that
		if $\mathfrak{b}(R) \leq \mathfrak{d}(S)$, then $\mathcal{B}(R) \subseteq
		\mathcal{D}(S)$ and so on.

		The resulting analog of Cichon's diagram is not isomorphic
		to the original diagram, in that some strict inequalities of cardinal
		characteristics are consistent with ZFC but the corresponding
		computability-theoretic notions coincide.

		Using their point of view we find a new dual notion to canonical immunity: that of \emph{a numbering such that no recursive set is large with respect to it}.
		\begin{definition}
			Let $D=(e\mapsto D_e)$ be a numbering of the finite subsets of $\omega$ and let $R\subseteq\omega$.
			We say that $D$ is \emph{$h$-endemic to $R$} if there are infinitely many $e$ with $|D_e|\ge h(e)$ and $D_e\subseteq R$.
			$D$ is a \emph{pandemic numbering} if there is an order function $h$ such that for all infinite recursive sets $R$, $D$ is $h$-endemic to $R$.
		\end{definition}
		Recall also that an \emph{escaping function} is a function $f:\omega\to\omega$ such that $f$ is not dominated by any recursive function.
		\begin{theorem}\label{bjoern2018}
			The Muchnik degrees of pandemic numberings and of hyperimmune sets coincide.
		\end{theorem}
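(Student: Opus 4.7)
The plan is to establish the two Muchnik inclusions separately: (i) every oracle that computes a pandemic numbering computes a function not dominated by any recursive function, and (ii) conversely.

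For (i), let $D$ be an $A$-computable pandemic numbering with witness order function $h$, and consider the $A$-computable function $f(e)=\max D_e$. I would show $f$ escapes every recursive function. If some recursive $g$ dominated $f$ then $D_e\subseteq [0,g(e)]$ for almost all $e$. Since $h$ is an order function, $G(k)=\max\{g(e):h(e)\le k\}$ is total recursive, so I would build a recursive set $R=\{r_{h(0)}<r_{h(0)+1}<\cdots\}$ by setting $r_k>\max(G(k),r_{k-1})$. A short pigeonhole argument then gives $|R\cap [0,g(e)]|\le h(e)-h(0)<h(e)$ for every $e$, so no $e$ can satisfy both $D_e\subseteq R$ and $|D_e|\ge h(e)$, contradicting $h$-endemicity of $D$ to $R$.

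For (ii), given an $A$-computable escaping $f$, I would replace it by $F(n)=1+\max_{k\le n}f(k)$, which is nondecreasing and still escapes. The key lemma I would prove first is: for every infinite recursive $S=\{n_0<n_1<\cdots\}$, the subsequence $i\mapsto F(n_i)$ is also escaping; otherwise a recursive dominator $g$ of it lifts via $g'(k)=g(\min\{i:n_i\ge k\})$ to a recursive function dominating $F$ by monotonicity. Then I would define $D_{\langle e,c\rangle}$ to be the set of the first $c$ elements enumerated into $W_e$ by stage $F(\langle e,c\rangle)$ (and $\emptyset$ if fewer than $c$ elements have appeared). For any infinite recursive $R=W_{e^*}$, applying the lemma to $S_{e^*}=\{\langle e^*,c\rangle:c\in\omega\}$ against the recursive function $t_{e^*}(c)$ giving the stage by which $c$ elements of $R$ are enumerated produces infinitely many $c$ with $F(\langle e^*,c\rangle)\ge t_{e^*}(c)$, so that $D_{\langle e^*,c\rangle}$ is a $c$-element subset of $R$. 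Picking a slowly growing order function such as $h(n)=\lceil\log_2(n+4)\rceil$ then makes $h(\langle e^*,c\rangle)\le c$ eventually in $c$, which secures pandemicity; if the statement requires $D$ to range over \emph{every} finite subset of $\omega$, I would additionally interleave a canonical enumeration on even-index slots and shift the above construction to the odd slots.

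The main obstacle I expect is the key lemma in (ii): without first passing to the running-maximum version of $f$, a restriction to a recursive subset can easily fail to escape --- for instance, if $f$ vanishes off the even numbers, its restriction to the odds is identically zero --- so the monotonization step is what makes the whole construction work uniformly in $R$.
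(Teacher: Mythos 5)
Your proof is correct and follows the same overall plan as the paper's: in one direction you extract an escaping function $e\mapsto\max D_e$ from a pandemic numbering (via the contrapositive), and in the other you use an escaping function to build $D_{\langle e,c\rangle}$ as a finite piece of $W_e$ found within a time budget controlled by the escaping function. Your presentation is somewhat more careful than the paper's terse sketch; in particular, your monotonization of $f$ to $F$ and your explicit ``key lemma'' (a nondecreasing escaping function still escapes along any infinite recursive subsequence) address precisely the point that the escape property must be transferred to the slice $\{\langle e^*,c\rangle : c\in\omega\}$ before it can be played off against the recursive time function $t_{e^*}$ --- a step the paper leaves implicit. Your choice of a slowly growing $h$ (logarithmic in the pairing index) cleanly handles the requirement $|D_{\langle e^*,c\rangle}|\ge h(\langle e^*,c\rangle)$, and the observation that $h(0)\ge 2$ is what makes the pigeonhole in direction (i) close up is exactly right; the interleaving trick at the end to restore surjectivity of the numbering is also the standard and correct fix.
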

		\begin{proof}
			In one direction, if $e\mapsto\max D_e$ is recursively bounded then we show that $D$ is not a pandemic numbering.
			Namely, we construct a recursive set $R$ which waits for $h$ to get large ($h(e)> d$ say) and only then lets its $d$th element $r_d$ enter $R$, and lets $r_d$ be large enough (larger than $\max D_k$, $k\le e$) to prevent $|D_e|\ge h(e)$, $D_e\subseteq R$.

			In the other direction, given a hyperimmune set, we (as is well known) also have an escaping function $f$.
			At those inputs $e$ where $f$ is greater than a function associated with a potential infinite recursive set $R_k$ (given as the graph of a partial recursive $\{0,1\}$-valued function),
			we define the next $D_e$ so as to ensure $D_e\subseteq R$ and $|D_e|\ge h(e)$.
			Namely, the function to escape is the time $e\mapsto t_k(e) = t(\la e,k\ra)$ it takes for $R_k$ to get $h(e)$ many elements.
			If $R_k$ is really infinite then $t_k$ is total and so $f(e)\ge t_k(e)$ for some (infinitely many) $e$.
			So we define $D_{\la e,k\ra}$ to consist of the first $h(\la e,k\ra)$ elements of $R_k$, if any, as found during a search time of $f(\la e,k\ra)$ or even just $f(e)$.
		\end{proof}
		By Corollary \ref{borrowed} and Theorem \ref{bjoern2018},
		the dualism between immunity and pandemics is the same, Muchnik-degree-wise, as that between, in the notation of Brendle et al.~\cite{MR3445411}, 
		\begin{itemize}
			\item eventually different functions, $\mathfrak b(\ne^*)$, and
			\item functions that are infinitely often equal to each recursive function, $\mathfrak d(\ne^*)$.
		\end{itemize}

		\begin{remark}
			As the recursive sets are closed under complement, a notion of \emph{bi-pandemic} would be the same as \emph{pandemic}. Thus, while we do not know whether canonical bi-immunity is Muchnik equivalent to canonical immunity, there is no corresponding open problem on the dual side.
		\end{remark}

	\bibliography{research,bibliography}{}

\begin{thebibliography}{10}

\bibitem{DNRWWKL}
Klaus Ambos-Spies, Bj{\o}rn Kjos-Hanssen, Steffen Lempp, and Theodore~A.
  Slaman.
\newblock Comparing {DNR} and {WWKL}.
\newblock {\em J. Symbolic Logic}, 69(4):1089--1104, 2004.

\bibitem{MR1350295}
Tomek Bartoszy\'nski and Haim Judah.
\newblock {\em Set theory}.
\newblock A K Peters, Ltd., Wellesley, MA, 1995.
\newblock On the structure of the real line.

\bibitem{MR1233917}
Tomek Bartoszy\'nski, Haim Judah, and Saharon Shelah.
\newblock The {C}icho\'n diagram.
\newblock {\em J. Symbolic Logic}, 58(2):401--423, 1993.

\bibitem{2016arXiv161001650B}
A.~A. {Beros} and K.~A. {Beros}.
\newblock {Index Sets of Universal Codes}.
\newblock {\em ArXiv e-prints}, October 2016.

\bibitem{2017arXiv170703947B}
A.~A. {Beros} and K.~A. {Beros}.
\newblock {Canonical immunity and genericity}.
\newblock {\em ArXiv e-prints}, July 2017.

\bibitem{MR3629746}
Achilles~A. Beros, Mushfeq Khan, and Bj{\o}rn Kjos-Hanssen.
\newblock Effective bi-immunity and randomness.
\newblock In {\em Computability and complexity}, volume 10010 of {\em Lecture
  Notes in Comput. Sci.}, pages 633--643. Springer, Cham, 2017.

\bibitem{MR3445411}
J\"org Brendle, Andrew Brooke-Taylor, Keng~Meng Ng, and Andr\'e Nies.
\newblock An analogy between cardinal characteristics and highness properties
  of oracles.
\newblock In {\em Proceedings of the 13th {A}sian {L}ogic {C}onference}, pages
  1--28. World Sci. Publ., Hackensack, NJ, 2015.

\bibitem{CenzerHinman}
Douglas Cenzer and Peter~G. Hinman.
\newblock Degrees of difficulty of generalized r.e. separating classes.
\newblock {\em Arch. Math. Logic}, 46(7-8):629--647, 2008.

\bibitem{MR2518817}
Noam Greenberg and Joseph~S. Miller.
\newblock Lowness for {K}urtz randomness.
\newblock {\em J. Symbolic Logic}, 74(2):665--678, 2009.

\bibitem{MillerGreenberg}
Noam Greenberg and Joseph~S. Miller.
\newblock Diagonally non-recursive functions and effective {H}ausdorff
  dimension.
\newblock {\em Bull. Lond. Math. Soc.}, 43(4):636--654, 2011.

\bibitem{MR3251268}
Mushfeq Khan.
\newblock {\em Some results on algorithmic randomness and
  computability-theoretic strength}.
\newblock ProQuest LLC, Ann Arbor, MI, 2014.
\newblock Thesis (Ph.D.)--The University of Wisconsin - Madison.

\bibitem{KhanMiller}
Mushfeq Khan and Joseph~S. Miller.
\newblock Forcing with bushy trees.
\newblock {\em Bull. Symb. Log.}, 23(2):160--180, 2017.

\bibitem{MR2813422}
Bj{\o}rn Kjos-Hanssen, Wolfgang Merkle, and Frank Stephan.
\newblock Kolmogorov complexity and the recursion theorem.
\newblock {\em Trans. Amer. Math. Soc.}, 363(10):5465--5480, 2011.

\bibitem{MR2140044}
Andr{\'e} Nies, Frank Stephan, and Sebastiaan~A. Terwijn.
\newblock Randomness, relativization and {T}uring degrees.
\newblock {\em J. Symbolic Logic}, 70(2):515--535, 2005.

\bibitem{MR2660926}
Nicholas Rupprecht.
\newblock Relativized {S}chnorr tests with universal behavior.
\newblock {\em Arch. Math. Logic}, 49(5):555--570, 2010.

\bibitem{MR3629748}
Stephen~G. Simpson.
\newblock Turing degrees and {M}uchnik degrees of recursively bounded {DNR}
  functions.
\newblock In {\em Computability and complexity}, volume 10010 of {\em Lecture
  Notes in Comput. Sci.}, pages 660--668. Springer, Cham, 2017.

\end{thebibliography}
	\bibliographystyle{plain}
\end{document}